\numberwithin{equation}{section}
\newtheorem{theorem}{Theorem}[section]
\newtheorem{definition}[theorem]{Definition}
\newtheorem{lemma}[theorem]{Lemma}
\newtheorem{corollary}[theorem]{Corollary}
\theoremstyle{remark}
\newtheorem{remark}[theorem]{Remark}
\newtheorem{example}[theorem]{Example}
\theoremstyle{plain}
\newcommand{\R}{\mathbb{R}}
\newcommand{\N}{\mathbb{N}}
\newcommand{\cS}{\mathcal{S}}
\newcommand{\Rn}{{\mathbb{R}^n}}
\newcommand{\Zn}{{\mathbb{Z}^n}}
\DeclareMathOperator{\supp}{supp}
\newcommand{\Lh}{L^{p,\lambda}}
\newcommand{\Lnh}{\mathcal{L}^{p,\lambda}}
\newcommand{\Vz}{V_0L^{p,\lambda}}
\newcommand{\Vi}{V_\infty L^{p,\lambda}}
\newcommand{\Vast}{V^{(\ast)} L^{p,\lambda}}
\newcommand{\Vziast}{V_{0,\infty}^{(\ast)}L^{p,\lambda}}
\newcommand{\Zor}{\mathbb{L}^{p,\lambda}}
\begin{document}

\title{Approximation in Morrey spaces}

\author[A. Almeida]{Alexandre Almeida$^{*}$}
\address{Center for R\&D in Mathematics and Applications, Department of Mathematics, University of Aveiro, 3810-193 Aveiro, Portugal}
\email{jaralmeida@ua.pt}

\author[S. Samko]{Stefan Samko}
\address{University of Algarve, Department of Mathematics, Campus de Gambelas, 8005-139 Faro, Portugal}
\email{ssamko@ualg.pt}

\thanks{$^*$ Corresponding author.}
\thanks{This work was supported in part by the Portuguese Foundation for Science and Technology (FCT -- Funda\c{c}\~{a}o para a Ci\^{e}ncia e a Tecnologia), through CIDMA -- Center for Research and Development in Mathematics and Applications, within project UID/MAT/04106/2013.}

\date{\today}

\subjclass[2010]{46E30, 42B35, 42B20}

\keywords{Morrey space, vanishing properties, approximation, convolution}

\begin{abstract}
A new subspace of Morrey spaces whose elements can be approximated by infinitely differentiable compactly supported functions is introduced. Consequently, we give an explicit description of the closure of the set of such functions in Morrey spaces. A generalization of known embeddings of Morrey spaces into weighted Lesbesgue spaces is also obtained.
\end{abstract}

\maketitle


\section{Introduction}\label{sec:intro}

Morrey spaces are widely used in applications to regularity properties of solutions to PDE including the study of Navier-Stokes equations (see \cite{Tri13} and references therein). Although such spaces allow to describe local properties of functions better than Lebesgue spaces, they have some unpleasant issues. It is well known that Morrey spaces are non separable and that the usual classes of nice functions are not dense in such spaces.

The theory of Morrey spaces goes back to Morrey \cite{Mor38} who considered related integral inequalities in connection with regularity properties of solutions to nonlinear elliptic equations. In the form of Banach spaces of functions, called thereafter Morrey spaces, the ideas of Morrey \cite{Mor38} were further developed by Campanato \cite{Cam64}. A more systematic study of these (and even more general) spaces was carried out by Peetre \cite{Pee69} and Brudnyi \cite{Bru71}. We refer to the books \cite{Adam15,Gia83,Pick13,Tay00,Tri13} and  the overview \cite{RafNSamSam13} for additional references and basic properties and generalizations of Morrey spaces. We also refer to \cite{AdamXiao12} for Harmonic Analysis in Morrey spaces and the special issue with Editorial \cite{SawGGulTan14} for a discussion on related function spaces.

In \cite{Zor86} it was observed that the set of functions in Morrey spaces for which the translation is continuous in Morrey norm plays an important role in approximation. This was sketched in \cite[Proposition.~3]{Zor86} and also discussed in \cite{Chia92} and \cite{Kato92}. A description of this set of functions in easily verified terms  seems to be a difficult task for unbounded domains; at least the authors were unable to find any one in the literature. In \cite{Zor86} and \cite{Kato92} there were given some results on equivalence between belonging to Zorko space and approximation by mollifiers.

In this paper we introduce some subspaces of Morrey spaces by adding some ``vanishing'' type conditions. One of them is similar to the already known vanishing property, but related to the behavior at infinity instead of that at the origin. Another is connected with the truncation of functions to the exterior of large balls. These two additional properties, together with the vanishing property at the origin, allow us to show that all elements in this new subspace, denoted in the sequel by $\Vziast$, may be approximated by $C_0^\infty$ functions in Morrey norm. In particular, dilation type identity approximations with integrable kernels strongly converges for all the functions in $\Vziast$.

As we shall see, the set $\Vziast$ is strictly smaller than Zorko class. An example of a Morrey function belonging to Zorko class but not belonging to this new subspace is given below. Moreover, $\Vziast$ is closed in the Morrey space. Consequently, we give an explicit description of the closure of $C_0^\infty$ in Morrey spaces. This closure plays an important role in Harmonic Analysis on Morrey spaces, including Calder\'{o}n-Zygmund theory, since its dual constitutes a predual of Morrey spaces (cf. \cite{AdamXiao12}). Preduals of Morrey spaces have been studied by many authors, see the recent book \cite{Adam15} and the paper \cite{RosTri15} for further details and references.

%

In addition to consideration of smaller subspaces of Morrey spaces, we also generalize some known embeddings of these space into weighted Lebesgue spaces. Various examples are presented showing the difference between all those spaces.

The paper is organized as follows. After some notation and preliminaries on Morrey spaces, the main ideas and results are given in Sections \ref{sec:subspaces}, \ref{sec:strictembed}, \ref{sec:approx-subspaces} and \ref{sec:improved-embed}. In Section~\ref{sec:subspaces} we introduce new Morrey subspaces and discuss some of their properties, including the invariance with respect to convolutions with integrable kernels. The relation between these subspaces and already known classes is discussed in Section~\ref{sec:strictembed}. In Section~\ref{sec:approx-subspaces} we study the approximation by nice functions in different Morrey subspaces. The main results here concern the approximation of functions in $\Vziast$ by $C_0^\infty$ functions in Morrey norm, and  the description of the closure of $C_0^\infty$ in Morrey spaces in an explicit way. Finally, in Section~\ref{sec:improved-embed}, we generalize some known embeddings of Morrey spaces into weighted $L^p$ spaces.


\section{Preliminaries on Morrey spaces}\label{sec:prelim}

We denote by $\mathbb{R}^{n}$ the $n$-dimensional real
Euclidean space. We write $B(x,r)$ for the open ball in
$\mathbb{R}^{n}$ centered at $x\in \mathbb{R}^{n}$ with radius $r>0$.
If $E\subseteq {\mathbb{R}^{n}}$ is a  measurable set, then $|E|$ stands
for its (Lebesgue) measure and $\chi_{E}$ denotes its characteristic
function. By $\supp f$ we denote the support of the function $f$. The notation $X\hookrightarrow Y$ stands for continuous embeddings
from the normed space $X$ into the normed space $Y$.
We use $c$ as a generic positive constant, i.e., a constant whose
value may change with each appearance. The expression $f
\lesssim g$ means that $f\leq c\,g$ for some independent constant
$c$, and $f\approx g$ means $f \lesssim g \lesssim f$.


The set $\cS(\Rn)$ denotes the usual Schwartz class of complex-valued rapidly decreasing infinitely differentiable
functions on $\Rn$ and $\cS'(\Rn)$ is the set of all tempered distributions in $\Rn$. The class $C_0^\infty(\Rn)$ consists of all those functions in $\cS(\Rn)$ with compact support. As usual, $L^p(E)$ with $1\leq p <\infty$ and $E$ a measurable subset of $\Rn$, is the standard Lebesgue space normed by
$$
\|f\|_{L^p(E)}= \left( \int_E |f(x)|^p\,dx\right)^{1/p}.
$$
When $E=\Rn$ we shall write only $\|\cdot\|_p$ instead of $\|\cdot\|_{L^p(\Rn)}$.


In the sequel $\Omega \subseteq \Rn$ is an open set, $\widetilde{B}(x,r):= B(x,r) \cap \Omega$ for $x\in\Omega$, $r>0$, and $L^p_{\textrm{loc}}(\Omega)$ stands for the space of all locally $p$-integrable functions on $\Omega$. Moreover, we use the notation
$$
\mathfrak{M}_{p,\lambda}(f;x,r):= \frac{1}{r^\lambda} \int_{\widetilde{B}(x,r)} |f(y)|^p\,dy\,, \ \ \ \ \ \ x\in\Omega, \ \ \ r>0, \ \ \ f\in L^1_{\textrm{loc}}(\Omega)
$$
(as in \cite{NSam13}).

\subsection{Classical Morrey spaces}


Let $1\leq p < \infty$ and $0\leq \lambda \leq n$. The \emph{homogeneous Morrey space} $L^{p,\lambda}(\Omega)$ (sometimes also called global Morrey space) is defined as
$$
L^{p,\lambda}(\Omega)=\left\{ f\in L^p_{\textrm{loc}}(\Omega): \sup_{x\in\Omega,\; r>0} \mathfrak{M}_{p,\lambda}(f;x,r) < \infty\right\}.
$$
Its \emph{inhomogeneous} counterpart $\mathcal{L}^{p,\lambda}(\Omega)$ (sometimes also called local Morrey space) is the space
$$
\mathcal{L}^{p,\lambda}(\Omega)=\left\{ f\in L^p_{\textrm{loc}}(\Omega): \sup_{x\in\Omega,\; r\in]0,1]} \mathfrak{M}_{p,\lambda}(f;x,r) < \infty\right\}.
$$
They are both Banach spaces equipped with the corresponding norms
\begin{equation}\label{HomMorreynorm}
\|f\|_{L^{p,\lambda}(\Omega)}:= \sup_{x\in\Omega,\; r>0} \mathfrak{M}_{p,\lambda}(f;x,r)^{1/p}
\end{equation}
and
\begin{equation}
\|f\|_{\mathcal{L}^{p,\lambda}(\Omega)}:= \sup_{x\in\Omega,\; r\in]0,1]} \mathfrak{M}_{p,\lambda}(f;x,r)^{1/p}.
\end{equation}

In general, we write $\|\cdot\|_{p,\lambda}$ for short to denote the Morrey norms above when it is clear which version is being considered in each situation. Moreover, we shall omit the reference to the domain when it coincides with the whole $\Rn$; e.g., we write $\Lh$ instead of $\Lh(\Rn)$ for short.

Morrey spaces can be seen as a refinement of $L^p$ spaces and they are part of a larger scale, called Morrey-Campanato spaces, which also includes Hölder spaces and the space $BMO$.

It can be shown that both spaces $L^{p,\lambda}(\Omega)$ and $\mathcal{L}^{p,\lambda}(\Omega)$ would reduce to $\{0\}$ if $\lambda>n$. Moreover, we have
$$
L^{p,0}(\Omega) = L^{p}(\Omega) \,, \ \ \ \ \ \ L^{p,n}(\Omega) = L^{\infty}(\Omega) = \mathcal{L}^{p,n}(\Omega)  \ \ \ \ \text{and} \ \ \ \ \mathcal{L}^{p,0}(\Omega) = \mathcal{L}^{p}(\Omega),
$$
where $\mathcal{L}^{p}(\Omega)$ stands for the \emph{uniform Lebesgue space} normed by
\begin{equation}\label{Lp-uniform}
\|f\|_{\mathcal{L}^{p}(\Omega)}:= \sup_{x\in\Omega} \|f\|_{L^p(\widetilde{B}(x,1))}.
\end{equation}
It is clear that we always have
$$L^{p,\lambda}(\Omega) \hookrightarrow \mathcal{L}^{p,\lambda}(\Omega) \ \ \ \ \ \ \text{and} \ \ \ \ \ \ L^{\infty}(\Omega) \hookrightarrow \mathcal{L}^{p,\lambda}(\Omega).$$

Given $1\leq p \leq q < \infty$ and $0 \leq \lambda,\mu\leq n$, an application of Hölder's inequality yields the embedding
$$
L^{q,\mu}(\Omega) \hookrightarrow \Lh(\Omega)
$$
under the condition
$$
\frac{\lambda-n}{p} = \frac{\mu-n}{q}
$$
if $|\Omega|=\infty$, and the condition
$$
\frac{\lambda-n}{p} \leq \frac{\mu-n}{q}
$$
if $|\Omega|<\infty$. In particular, for bounded domains $\Omega$, we have
$$
L^{\infty}(\Omega) \hookrightarrow \Lh(\Omega) \hookrightarrow L^p(\Omega).
$$

Easy calculations show that the homogeneous Morrey norm has the property
$$
\|f(t\cdot)\|_{L^{p,\lambda}(\Omega)} = t^{\frac{\lambda-n}{p}}\,\|f\|_{L^{p,\lambda}(\Omega)}\,, \ \ \ \ t>0,
$$
which reveals the homogeneous nature of the spaces $L^{p,\lambda}(\Omega)$. It is not hard to check that a Young's convolution inequality holds true for homogeneous Morrey spaces $\Lh(\Rn)$:
\begin{equation}\label{Young}
\|f\ast g\|_{p,\lambda} \leq \|g\|_1\,\|f\|_{p,\lambda} \,, \ \ \ \ 0\leq\lambda\leq n, \ \ \ 1\leq p<\infty.
\end{equation}

The following chain of embeddings is known (cf. \cite[Theorem~3.1]{RosTri15}):


$$
\cS \hookrightarrow B^{\frac{\lambda}{p}}_{p,p} \hookrightarrow L^{\frac{pn}{n-\lambda}} \hookrightarrow L^{p,\lambda} \hookrightarrow \mathcal{L}^{p,\lambda} \hookrightarrow L^p(w_\alpha) \hookrightarrow \cS'\,,
$$
with $1<p<\infty$, $0<\lambda<n$ and $\alpha<-n/p$, where $B^{s}_{q,r}$ stands for classical Besov spaces on $\Rn$ (see, e.g., \cite{Tri83}), and $L^p(w_\alpha)$ denotes the weighted Lebesgue space on $\Rn$ normed by
$$
\|f\|_{L^p(w_\alpha)}:= \|w_\alpha f\|_p\,,
$$
with the notation $w_\alpha(x):=(1+|x|^2)^{\alpha/2}$. All these embeddings are strict. For example,
\begin{equation}\label{ex:homogeneous}
|x|^{-\frac{n-\lambda}{p}} \in L^{p,\lambda} \ \ \ \text{but} \ \ \ \ |x|^{-\frac{n-\lambda}{p}} \notin L^{\frac{pn}{n-\lambda}}.
\end{equation}
As regards the embeddings of Morrey spaces into weighted Lesbesgue spaces, the weight above can be improved when we consider the homogeneous space $L^{p,\lambda}$. If fact, it also holds
$$\Lh \hookrightarrow L^p(w_\alpha) \ \ \ \ \ \text{if} \ \ \ \alpha<-\lambda/p$$
(cf. \cite[p.132]{Kato92}). Later, in Section \ref{sec:improved-embed}, we shall return to such embeddings into weighted Lebesgue spaces and generalize the corresponding results given in \cite{Kato92} and \cite{RosTri15}.

Morrey spaces $L^{p,\lambda}$ and $\mathcal{L}^{p,\lambda}$ are non separable when $\lambda>0$. Moreover, none of the usual classes of smooth functions (e.g., $C^\infty_0$ or  $\cS$) is dense in  $\Lh$ and $\Lnh$, see \cite[Proposition~3.7]{RosTri15}. Nevertheless, $C^\infty_0$ is dense in the dual $\big(\overset{\circ}{L}{}^{p,\lambda}\big)^\prime$, where $\overset{\circ}{L}{}^{p,\lambda}$ denotes the closure of $C^\infty_0$ in Morrey norm. This fact, observed in \cite{AdamXiao12}, plays a crucial role in the study of Harmonic Analysis on Morrey spaces. One has
\begin{equation}\label{bidual}
\big(\overset{\circ}{L}{}^{p,\lambda}\big)^{\prime\prime} = \Big(\big(\overset{\circ}{L}{}^{p,\lambda}\big)^\prime\Big)^\prime = L^{p,\lambda}\,, \ \ \ \ \ 1<p<\infty, \ \ \ 0\leq \lambda < n
\end{equation}
(cf. \cite[Corollary~4.2 and Remark~4.3]{RosTri15}), which extends to Morrey spaces the well known assertion (for Lebesgue spaces) $\big(\overset{\circ}{L}{}^{p,0}\big)^{\prime\prime} = \big(L^{p}\big)^{\prime\prime} = L^{p}$. Preduals of Morrey spaces have been investigated by many researchers starting in the paper \cite{Zor86}. In \cite{AdamXiao12} it was described by means of Hausdorff capacities and Muckenhoupt weights. We refer to \cite{RosTri15} for a study in the framework of tempered distributions and further discussions including historical remarks.

\subsection{Known subspaces}

Approximations to the identity do not behave well in general Morrey spaces, since these spaces may contain functions with singularities like homogeneous functions (cf. \eqref{ex:homogeneous}). This is  one reason supporting the need of finding appropriate subspaces of Morrey spaces were certain nice properties still hold.

The so-called \emph{vanishing Morrey space} (at the origin) $\Vz(\Omega)$ is the subspace of $\Lh(\Omega)$ consisting of all those functions $f$ such that
\begin{equation}\label{vzero}
\lim_{r\to 0}\, \sup_{x\in\Omega}\, \mathfrak{M}_{p,\lambda}(f;x,r) =0. \tag{$V^0$}
\end{equation}
This is a closed set of $\Lh$ (cf. Lemma \ref{lem:closed} below) and it was introduced by Chiarenza and Franciosi \cite{Chia92} in the study of elliptic equations in non divergence form; see also Vitanza \cite{Vit90, Vit93} for regularity results  for elliptic equations with coefficients in such subspace. Note that
$$
\lim_{r\to 0}\, \sup_{x\in\Omega}\, \mathfrak{M}_{p,\lambda}(f;x,r) =0 \ \ \ \Longleftrightarrow \ \ \ \lim_{r\to 0} \sup_{x\in\Omega, \atop t\in]0,r]} \mathfrak{M}_{p,\lambda}(f;x,t) =0.
$$
Since all $L^p$ functions have this vanishing property, we have $V_0L^{p,0}(\Omega) = L^{p}(\Omega)$. However, if $\lambda>0$, then $\Vz(\Omega)$ is a proper subset of $\Lh(\Omega)$ (cf. Example \ref{ex:notinV0Vinfty}).

An interesting feature of $\Vz(\Omega)$, not shared by the whole Morrey space $\Lh(\Omega)$, is the possibility of approximation by bounded functions on bounded domains, as observed in \cite[Lemma~1.1]{Chia92}.

Another important subset of $\Lh(\Omega)$ is the so-called \emph{Zorko subspace} $\Zor(\Omega)$, introduced in \cite{Zor86}, consisting of all functions $f\in\Lh(\Omega)$ on which the translation operator is continuous, i.e.,
\begin{equation}\label{zorko-subs}
\Zor(\Omega):=\{ f\in\Lh(\Omega): \ \ \|\tau_\xi f-f\|_{p,\lambda} \rightarrow 0 \ \ \ \text{as} \ \ \ \xi\to 0\},
\end{equation}
where $\tau_\xi f = f(\cdot-\xi)$, $\,\xi\in\Rn$  (setting $f(x)=0$ for $x\in\Rn\setminus\Omega$). This is also a closed set in Morrey spaces and it is known that approximation via mollifiers is possible in such subspace, see \cite[Proposition 3]{Zor86}.

In the limiting case $\lambda=0$ we have $\mathbb{L}^{p,0}(\Omega) = V_0L^{p,0}(\Omega) = L^p(\Omega)$. For $\lambda>0$, in general the previous subspaces are related as follows:
\begin{equation}\label{zorvzmor}
\Zor(\Omega) \subset \Vz(\Omega) \subset \Lh(\Omega)
\end{equation}
(cf. \cite[Corollary 3.3]{Kato92}). Nevertheless, taking into account \cite[Lemma 1.2]{Chia92}, we have
$$
\Zor(\Omega) = \Vz(\Omega) \ \ \ \ \text{if} \ \ \ \ |\Omega|<\infty.
$$
For unbounded domains, e.g., $\Omega=\Rn$, the inclusions in \eqref{zorvzmor} are strict.


\section{New subspaces of Morrey spaces}\label{sec:subspaces}

In the sequel we introduce new subspaces of Morrey spaces by means of vanishing type conditions at infinity and show how they are related with the already known subspaces mentioned above. The context now is the whole $\Rn$, so that we omit the domain in the notations as agreed before.


\begin{definition}\label{def:vinfty}
Let $0\leq \lambda < n$ and $1\leq p < \infty$. We define $\Vi$ as the subset of $\Lh$ consisting of all those functions $f$ such that
\begin{equation}
\lim_{r\to \infty}\, \sup_{x\in\Rn}\, \mathfrak{M}_{p,\lambda}(f;x,r) =0. \tag{$V^\infty$}
\end{equation}
\end{definition}
Note that the subset $\Vi$ is defined similarly to $\Vz$, but now in terms of a corresponding vanishing property at infinity.

\begin{example}\label{ex:notinV0Vinfty}
For $\alpha,\beta>0$ let
$$
f_{\alpha,\beta}(x):= \left\{
\begin{array}{ll}
  \frac{1}{|x|^\alpha}\,, & |x|\leq 1, \\
  \frac{1}{|x|^\beta}\,, & |x| > 1.
\end{array}
\right.
$$
Then
$$f_{\alpha,\beta}\in \Vz\cap \Vi \ \ \ \ \text{if} \ \ \ \ \alpha<\frac{n-\lambda}{p}<\beta.$$
In the limiting case $\alpha=\frac{n-\lambda}{p}=\beta$, we have
$$f_{\alpha,\beta} \in \Lh \ \ \ \ \text{but} \ \ \ \ f_{\alpha,\beta} \notin \Vz \cup \Vi.$$
\end{example}

We introduce another subset by distinguishing a vanishing property after truncation in large balls. For convenience we use the notation
$$
\mathcal{A}_{N,p}(f) := \sup_{x\in\Rn}\,\int_{B(x,1)} |f(y)|^p\,\chi_N(y)\,dy
$$
with
$$
\chi_N:= \chi_{\Rn \setminus B(0,N)}\,, \quad N\in\N.
$$
These truncations make sense, for example, for functions $f$ in the uniform Lebesgue space $\mathcal{L}^p$ for $1\leq p < \infty$.
\begin{definition}\label{def:vast}
For $0\leq \lambda < n$ and $1\leq p < \infty$, we define $\Vast$ as the set of all functions $f\in \Lh$ having the vanishing property
\begin{equation}\label{vast-property}
\lim_{N\to \infty} \mathcal{A}_{N,p}(f) =0. \tag{$V^\ast$}
\end{equation}
\end{definition}

By the Lebesgue dominated convergence theorem, we can see that every $L^p$ function has property ($V^\ast$) and hence $V^{(\ast)}L^{p,0} = L^p$. In a sense property ($V^\ast$) allow us to overcome difficulties in applying such a theorem when the functions are not necessarily in $L^p$.

\begin{lemma}\label{lem:vast}
A function $f\in\Lh$ satisfies property ($V^\ast$) if and only if
\begin{equation}\label{vast-uniformcond}
\lim_{N\to \infty}\, \sup_{x\in\Rn} \int_{B(x,r)} |f(y)|^p\,\chi_N(y)\,dy = 0
\end{equation}
uniformly in $r\in]0,R_0]$ for any fixed $R_0>0$.
\end{lemma}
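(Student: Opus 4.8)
The plan is to prove the two implications separately, after one preliminary reduction. First I would record that, for fixed $x\in\Rn$ and $N\in\N$, the map $r\mapsto\int_{B(x,r)}|f(y)|^p\chi_N(y)\,dy$ is nondecreasing, the integrand being nonnegative; hence by monotone convergence $\sup_{r\in]0,R_0]}\int_{B(x,r)}|f(y)|^p\chi_N(y)\,dy=\int_{B(x,R_0)}|f(y)|^p\chi_N(y)\,dy$. Interchanging the suprema in $x$ and in $r$, the assertion that the limit in \eqref{vast-uniformcond} is uniform in $r\in]0,R_0]$ is therefore equivalent to
$$
\lim_{N\to\infty}\,\sup_{x\in\Rn}\int_{B(x,R_0)}|f(y)|^p\chi_N(y)\,dy=0 ,
$$
and this reformulated version is what I would work with. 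The implication \eqref{vast-uniformcond} $\Rightarrow$ ($V^\ast$) is then immediate: taking $R_0=1$ above, the left-hand supremum is exactly $\mathcal{A}_{N,p}(f)$, so the hypothesis reads $\mathcal{A}_{N,p}(f)\to0$, which is ($V^\ast$).

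For the converse, fix $R_0>0$; I would prove the reformulated condition by a finite covering argument. Choose once and for all points $z_1,\dots,z_M\in\Rn$, with $M=M(n,R_0)\in\N$ depending only on the dimension and on $R_0$, such that $B(0,R_0)\subseteq\bigcup_{i=1}^M B(z_i,1)$. By translation invariance, for every $x\in\Rn$ one then has $B(x,R_0)\subseteq\bigcup_{i=1}^M B(x+z_i,1)$, and consequently
$$
\int_{B(x,R_0)}|f(y)|^p\chi_N(y)\,dy\le\sum_{i=1}^{M}\int_{B(x+z_i,1)}|f(y)|^p\chi_N(y)\,dy\le M\,\mathcal{A}_{N,p}(f) .
$$
Taking the supremum over $x\in\Rn$ and then letting $N\to\infty$, property ($V^\ast$) forces the right-hand side to zero, which gives the reformulated \eqref{vast-uniformcond} and completes the proof.

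I do not expect a serious obstacle here: the argument is elementary and short. The only two points I would take care to phrase correctly are that the covering number $M$ in the last display must be independent of both $x$ and $N$ — this is precisely what translation invariance of Lebesgue measure provides, since a fixed covering of $B(0,R_0)$ is carried onto one of $B(x,R_0)$ — and that the reduction from the single radius $R_0$ to the statement uniform over all $r\in]0,R_0]$ is legitimate, which is exactly the monotonicity remark made at the start.
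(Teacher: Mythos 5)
Your proposal is correct and follows essentially the same route as the paper: the forward implication is immediate, and the converse is the same finite covering of $B(x,R_0)$ by a bounded number $M=M(n,R_0)$ of unit balls, yielding $\sup_x\int_{B(x,r)}|f|^p\chi_N\,dy\le M\,\mathcal{A}_{N,p}(f)$ uniformly in $r\in]0,R_0]$. Your preliminary monotonicity remark reducing the uniform statement to the single radius $r=R_0$ is a harmless (and slightly tidier) repackaging of what the paper does implicitly.
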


\begin{proof}
It is clear that condition \eqref{vast-uniformcond} immediately implies property ($V^\ast$). Conversely, suppose that $f$ satisfies ($V^\ast$).
Let $R_0>0$ be arbitrary fixed and let $x\in\Rn$. Then there exist $K_0\in\N$ (depending only on $R_0$ and $n$) and $x_j\in B(x,R_0)$, $j=1,\ldots,K_0$, such that
$$B(x,r) \subset \bigcup_{j=1}^{K_0} B(x_j,1)$$
for any $r\in]0,R_0]$. Since
\begin{equation*}
\int_{B(x,r)} |f(y)|^p\,\chi_N(y)\,dy  \leq  \sum_{j=1}^{K_0} \int_{B(x_j,1)} |f(y)|^p\,\chi_N(y)\,dy \leq c\, \sup_{z\in\Rn} \, \int_{B(z,1)} |f(y)|^p\,\chi_N(y)\,dy
\end{equation*}
with $c>0$ not depending on $x$, $r$ and $N$, we get the estimate
$$
\sup_{x\in\Rn} \int_{B(x,r)} |f(y)|^p\,\chi_N(y)\,dy \leq c\, \mathcal{A}_{N,p}(f),
$$
from which \eqref{vast-uniformcond} follows, uniformly in $r\in]0,R_0]$.
\end{proof}


\begin{definition}
For $0\leq \lambda<n$ and $1 \leq p < \infty$, one defines the subset $\Vziast$ of $\Lh$ as
\begin{equation*}\label{vnotation}
\Vziast = \Vz\cap\Vi\cap\Vast.
\end{equation*}
\end{definition}

\begin{example}\label{ex:phi}
Consider the function
$$\varphi(x)= \sum_{k=2}^\infty \chi_{B_k}(x)$$ where $B_k =B(2^k\textrm{e}_1,1)$, $k\in\N$, with $\textrm{e}_1=(1,0, \ldots,0)$. Then
$$ \varphi\in \Vz \cap \Vi \ \ \ \ \text{but} \ \ \ \ \varphi \notin \Vast$$
(cf. proof of Theorem \ref{theo:phi}).
\end{example}

\begin{lemma}\label{lem:closed}
All the vanishing subsets $\Vz$, $\Vi$ and $\Vast$ (and consequently $\Vziast$) are closed in $\Lh$.
\end{lemma}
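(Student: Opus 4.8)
The idea is to verify that each of the three vanishing conditions is preserved under convergence in the Morrey norm; the claim for $\Vziast$ then follows since a finite intersection of closed sets is closed. So suppose $f_j \to f$ in $\Lh$, with each $f_j$ belonging to one of the subspaces, and show $f$ inherits the relevant property. The common mechanism is the triangle inequality applied inside the quantity $\mathfrak{M}_{p,\lambda}(\cdot;x,r)^{1/p}$, which is itself a (semi)norm in $f$ for fixed $x,r$: indeed $\mathfrak{M}_{p,\lambda}(f;x,r)^{1/p} = r^{-\lambda/p}\|f\|_{L^p(\widetilde B(x,r))}$, so
$$
\mathfrak{M}_{p,\lambda}(f;x,r)^{1/p} \leq \mathfrak{M}_{p,\lambda}(f-f_j;x,r)^{1/p} + \mathfrak{M}_{p,\lambda}(f_j;x,r)^{1/p} \leq \|f-f_j\|_{p,\lambda} + \mathfrak{M}_{p,\lambda}(f_j;x,r)^{1/p}.
$$

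**Case $\Vz$.** Given $\varepsilon>0$, first choose $j$ so that $\|f-f_j\|_{p,\lambda} < \varepsilon/2$; then, since $f_j\in\Vz$, choose $\delta>0$ so that $\sup_{x}\mathfrak{M}_{p,\lambda}(f_j;x,r) < (\varepsilon/2)^p$ for all $r\in]0,\delta[$. Taking the supremum over $x\in\Rn$ in the displayed inequality gives $\sup_x \mathfrak{M}_{p,\lambda}(f;x,r)^{1/p} < \varepsilon$ for $r\in]0,\delta[$, which is precisely property ($V^0$) for $f$. (This also shows $f\in\Lh$, but that is already part of the hypothesis.)

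**Case $\Vi$.** Identical in structure, replacing "$r\to 0$" by "$r\to\infty$": after fixing $j$ with $\|f-f_j\|_{p,\lambda}<\varepsilon/2$, use $f_j\in\Vi$ to find $R>0$ with $\sup_x\mathfrak{M}_{p,\lambda}(f_j;x,r)<(\varepsilon/2)^p$ for $r>R$, and conclude $\sup_x\mathfrak{M}_{p,\lambda}(f;x,r)^{1/p}<\varepsilon$ for $r>R$.

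**Case $\Vast$.** Here one works with $\mathcal{A}_{N,p}$ instead. For fixed $N$ the functional $f\mapsto \mathcal{A}_{N,p}(f)^{1/p} = \sup_x \|f\,\chi_N\|_{L^p(B(x,1))}$ is again a seminorm dominated by the Morrey norm, since multiplication by the characteristic function $\chi_N$ does not increase $L^p$ norms and $\|g\|_{L^p(B(x,1))} = \mathfrak{M}_{p,\lambda}(g;x,1)^{1/p} \leq \|g\|_{p,\lambda}$. Thus $\mathcal{A}_{N,p}(f)^{1/p} \leq \|f-f_j\|_{p,\lambda} + \mathcal{A}_{N,p}(f_j)^{1/p}$; choosing $j$ with $\|f-f_j\|_{p,\lambda}<\varepsilon/2$ and then, using $f_j\in\Vast$, $N_0$ with $\mathcal{A}_{N,p}(f_j)<(\varepsilon/2)^p$ for $N\geq N_0$, we obtain $\mathcal{A}_{N,p}(f)<\varepsilon^p$ for $N\geq N_0$, which is ($V^\ast$).

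**Conclusion and main difficulty.** In all three cases $f$ lies in the respective subspace, so $\Vz$, $\Vi$, $\Vast$ are closed, and hence so is $\Vziast = \Vz\cap\Vi\cap\Vast$. There is no serious obstacle: the only point requiring a little care is noticing that $\mathfrak{M}_{p,\lambda}(\cdot;x,r)^{1/p}$ and $\mathcal{A}_{N,p}(\cdot)^{1/p}$ are sublinear and controlled by $\|\cdot\|_{p,\lambda}$ \emph{uniformly} in the auxiliary parameters $x,r$ (resp. $x$), which is exactly what lets the single quantity $\|f-f_j\|_{p,\lambda}$ absorb the error after the supremum is taken. For $\Vast$ one might alternatively quote Lemma~\ref{lem:vast} to phrase the argument with $\mathfrak{M}_{p,\lambda}(\cdot;x,r)$ directly, but it is not needed.
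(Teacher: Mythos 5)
Your proof is correct and follows essentially the same route as the paper's: a triangle-inequality argument showing that each vanishing functional is controlled uniformly by the Morrey norm, so the property passes to the limit. The only cosmetic difference is that you apply Minkowski's inequality to the $1/p$-th powers where the paper works with the $p$-th powers and constants $2^p$; the paper also only writes out the $\Vast$ case, whereas you detail all three.
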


\begin{proof}
The proof uses standard arguments which can be adapted to all the cases. By this reason we give details for the subset $\Vast$ only.

Let $(f_k)_k$ be a sequence of functions in $\Vast$ converging to $f$ in $\Lh$. We want to show that $f\in \Vast$. For any $\varepsilon>0$ there exists $\overline{k}\in\N$ such that
$$
\mathcal{A}_{N,p}(f-f_{\overline{k}}) \leq \|f-f_{\overline{k}}\|^p_{p,\lambda} < \frac{\varepsilon}{2^{p+1}}.
$$
On the other hand, for such fixed $\overline{k}$, by the vanishing property ($V^\ast$) of $f_{\overline{k}}$ there exists $\overline{N}\in\N$ such that
$$
\mathcal{A}_{N,p}(f_{\overline{k}}) < \frac{\varepsilon}{2^{p+1}}
$$
for every $N\ge \overline{N}$. Hence, we have
$$
\mathcal{A}_{N,p}(f) \leq 2^p\, \mathcal{A}_{N,p}(f-f_{\overline{k}}) + 2^p\, \mathcal{A}_{N,p}(f_{\overline{k}}) <\varepsilon
$$
for arbitrary large $N\in\N$, which completes de proof.
\end{proof}

We end this section with a discussion on the preservation of the vanishing properties introduced above by convolution operators with integrable kernels.

\begin{theorem}\label{theo:invariant}
Let $0\leq \lambda \leq n$, $1\leq p < \infty$. Then the Morrey subspaces $\Vz$, $\Vi$ and $\Vast$ are invariant with respect to convolutions with integrable kernels.
\end{theorem}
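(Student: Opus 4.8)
The plan is to fix an integrable kernel $g\in L^1(\Rn)$ and show that $f\mapsto f\ast g$ maps each of $\Vz$, $\Vi$, $\Vast$ into itself; since Young's inequality \eqref{Young} already guarantees $f\ast g\in\Lh$ with $\|f\ast g\|_{p,\lambda}\leq\|g\|_1\|f\|_{p,\lambda}$, only the vanishing conditions need to be checked. The common mechanism will be: write $(f\ast g)(y)=\int f(y-z)g(z)\,dz$, raise to the $p$-th power, apply Minkowski's integral inequality (or Jensen with the probability measure $|g(z)|\,dz/\|g\|_1$) to pull the $z$-integration outside, and thereby bound $\mathfrak{M}_{p,\lambda}(f\ast g;x,r)^{1/p}$ by $\int \mathfrak{M}_{p,\lambda}(f;x-z,r)^{1/p}\,|g(z)|\,dz$, i.e. by a convolution of the (bounded, measurable in the center) function $x\mapsto\mathfrak{M}_{p,\lambda}(f;x,r)^{1/p}$ with $|g|$.

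For $\Vz$ this is immediate: $\sup_{x}\mathfrak{M}_{p,\lambda}(f\ast g;x,r)^{1/p}\leq\|g\|_1\,\sup_{x}\mathfrak{M}_{p,\lambda}(f;x,r)^{1/p}$, which tends to $0$ as $r\to0$ by hypothesis on $f$. For $\Vi$ the argument is literally the same with $r\to\infty$ in place of $r\to0$. So the first two cases reduce to one line after the Minkowski/Jensen step. For $\Vast$ the argument is essentially the same but needs a small splitting: given $\varepsilon>0$, split the $z$-integral into $|z|\leq M$ and $|z|>M$, choosing $M$ so that $\int_{|z|>M}|g(z)|\,dz$ is small (which controls the tail by $\|f\|_{p,\lambda}$ times a small factor, using $\mathcal{A}_{N,p}(f-f_{\bar k})\le\|f-f_{\bar k}\|_{p,\lambda}^p$-type estimates), and on $|z|\leq M$ use that $\chi_N(y)\ne0$ forces $|y-z|\geq N-M$, so $\int_{B(x,1)}|f(y-z)|^p\chi_N(y)\,dy\leq\mathcal{A}_{N-M,p}(f)$ uniformly in $x$ and in $|z|\le M$; then $\mathcal{A}_{N,p}(f\ast g)\lesssim\mathcal{A}_{N-M,p}(f)\,\|g\|_1^p+(\text{small})$, and letting $N\to\infty$ first, then $\varepsilon\to0$, gives property ($V^\ast$) for $f\ast g$.

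The only point requiring a little care is the measurability and the legitimacy of interchanging the order of integration in the Minkowski step, and, in the $\Vast$ case, making the parameter bookkeeping ($M$ depending on $\varepsilon$, then $N\to\infty$) clean — but none of this is a genuine obstacle. The main ``obstacle'', such as it is, is simply organizing the $\Vast$ estimate so that the geometric shift $B(x,1)-z\subset B(x-z,1)$ and the inclusion $\{\chi_N\ne0\}-z\subset\{\chi_{N-M}\ne0\}$ (valid for $|z|\le M$, $N>M$) are used to replace $\mathcal{A}_{N,p}(f\ast g)$ by $\mathcal{A}_{N-M,p}(f)$; once this is written down the convergence is automatic. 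Since the three arguments share the Minkowski step and differ only in which limit is taken, I would present the common reduction once and then dispatch the three cases, spelling out $\Vast$ in full and remarking that $\Vz$, $\Vi$ follow the same (simpler) pattern.
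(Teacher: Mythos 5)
Your proposal is correct and follows essentially the same route as the paper: Minkowski's integral inequality to reduce $\mathfrak{M}_{p,\lambda}(f\ast g;x,r)$ to $\sup_v\mathfrak{M}_{p,\lambda}(f;v,r)$ for the $\Vz$ and $\Vi$ cases, and the shift estimate $\chi_N(y)\le\chi_{N-|z|}(y-z)$ to bound $\mathcal{A}_{N,p}(f\ast g)$ for the $\Vast$ case. The only cosmetic difference is that the paper concludes the $\Vast$ case by dominated convergence applied to $\int|\phi(z)|\bigl[\mathcal{A}_{N-|z|,p}(f)\bigr]^{1/p}dz$, whereas you carry out the equivalent $\varepsilon$-splitting of the kernel into $|z|\le M$ and a small tail by hand.
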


\begin{proof}
Let $\phi\in L^1$. By \eqref{Young} it is clear that $f\ast\phi\in \Lh$ when $f\in\Lh$. Let $x\in\Rn$ and $r>0$. By Minkowski's integral inequality and a change of variables, we get
\begin{eqnarray*}
\frac{1}{r^\lambda}\int_{B(x,r)} |(f\ast \phi) (y)|^p \,dy & \leq & \frac{1}{r^\lambda} \left(\int_{\Rn} \left(\int_{B(x,r)}  |f(y-z)\,\phi(z)|^p\,dy\right)^{1/p} \,dz\right)^p\\
& = & \frac{1}{r^\lambda} \left(\int_{\Rn} |\phi(z)| \left(\int_{B(x-z,r)}  |f(u)|^p\,du\right)^{1/p} \,dz\right)^p\\
& \leq & \|\phi\|_1^p \, \sup_{v\in\Rn}\, \mathfrak{M}_{p,\lambda}(f;v,r).
\end{eqnarray*}
From this it follows that
$$f\in \Vz \; \Longrightarrow \; f\ast\phi\in \Vz \ \ \ \ \ \text{and} \ \ \ \ \ f\in \Vi \; \Longrightarrow \; f\ast\phi\in \Vi.$$

It remains to prove that the convolution also preserves property ($V^\ast$). As above, for $x\in\Rn$ and $N\in\N$, we get
$$
\left(\int_{B(x,1)} |(f\ast \phi) (y)|^p \,\chi_N(y)\,dy \right)^{1/p} \leq  \int_{\Rn} |\phi(z)| \left(\int_{B(x,1)}  |f(y-z)|^p\,\chi_N(y)\,dy\right)^{1/p} dz.
$$
We extend the notation from Definition~\ref{def:vast} as follows:
\begin{equation}\label{chi-notation}
\chi_a:= \chi_{\Rn \setminus B(0,a)} \ \ \text{if} \ \ a>0 \ \ \ \ \text{and} \ \ \ \ \chi_a\equiv 1 \ \ \text{if} \ \ a\leq 0.
\end{equation}
So we have $\chi_N(y) \leq \chi_{N-|z|}(y-z)$. Using this fact and changing variables on the right-hand side of the inequality above, we obtain the estimate
$$
\left(\int_{B(x,1)} |(f\ast \phi) (y)|^p \,\chi_N(y)\,dy \right)^{1/p} \leq  \int_{\Rn} |\phi(z)| \left(\sup_{v\in\Rn} \int_{B(v,1)}  |f(u)|^p\,\chi_{N-|z|}(u)\,du\right)^{1/p} dz.
$$
Since this is uniform with respect to $x\in\Rn$, we get
\begin{equation*}\label{key-estimate}
\big[\mathcal{A}_{N,p}(f\ast\phi)\big]^{1/p} \leq \int_{\Rn}|\phi(z)|\,\big[\mathcal{A}_{N-|z|,p}(f)\big]^{1/p} \, dz,
\end{equation*}
with the interpretation
$$
\mathcal{A}_{a,p}(f):=\sup_{x\in\Rn} \int_{B(x,1)} |f(y)|^p\,\chi_a(y)\,dy \,, \ \ \ \ \ a\in\R
$$
(according to \eqref{chi-notation}). Therefore we have
$$\mathcal{A}_{N,p}(f\ast\phi) \rightarrow 0  \ \ \ \text{as} \ \ \ N\to\infty,$$
by the Lebesgue dominated convergence theorem.
\end{proof}

\begin{remark}
An inspection of the last part of the previous proof shows that the convolution operator (with integrable kernel) preserves property ($V^\ast$) of functions belonging to the uniform Lebesgue space $\mathcal{L}^p$ (recall \eqref{Lp-uniform}).
\end{remark}

The preservation of the vanishing properties above is also true for other operators from harmonic analysis. We shall discuss such topic in another paper.


\section{On strict embeddings between Morrey subspaces}\label{sec:strictembed}

\begin{theorem}\label{theo:phi}
For any $0<\lambda<n$ and $1\leq p < \infty$, there are functions in $\Vz\cap\Vi$ which do not have property ($V^\ast$).
\end{theorem}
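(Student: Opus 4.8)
The plan is to produce an explicit function, exactly the $\varphi$ from Example~\ref{ex:phi}, and to verify the three required properties directly. Recall $\varphi = \sum_{k=2}^\infty \chi_{B_k}$ with $B_k = B(2^k\mathrm{e}_1,1)$. The key geometric observation is that the balls $B_k$ are pairwise disjoint with distances between consecutive ones growing like $2^k$, so at most one term of the sum is ``seen'' by any ball of radius $r\le 1$, while for larger $r$ only finitely many terms are seen and the $r^{-\lambda}$ normalization kills the count.

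\textbf{Step 1: $\varphi\in\Lh$.} For $x\in\Rn$ and $r>0$, estimate $\mathfrak{M}_{p,\lambda}(\varphi;x,r) = r^{-\lambda}|B(x,r)\cap\bigcup_k B_k|$. If $r\le 1$, then $B(x,r)$ meets at most one $B_k$ (for $k$ large; the first few $B_k$ are also mutually at distance $\ge 2$), so the numerator is $\le |B(x,r)| = c_n r^n$, giving $\mathfrak{M}_{p,\lambda}(\varphi;x,r)\le c_n r^{n-\lambda}\le c_n$ since $\lambda<n$. If $r>1$, then $B(x,r)$ meets $B_k$ only for those $k$ with $2^k\le |x|+r+1$, hence for at most $C\log(2+|x|+r)$ values of $k$; but more usefully, once $r>1$ the intersection is contained in finitely many unit balls and a crude bound is $\mathfrak{M}_{p,\lambda}(\varphi;x,r)\le r^{-\lambda}\cdot(\#\{k: B_k\cap B(x,r)\ne\emptyset\})\cdot c_n$. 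Since the relevant $k$ satisfy $2^k \le |x|+r+1$ and also $2^k\ge |x|-r-1$ when $|x|>r+1$, their number is $O(1)$ uniformly once one checks that consecutive radii double; a clean way is to note the $B_k$ meeting $B(x,r)$ all lie in an annulus whose inner and outer radii differ by $2r+2$, forcing at most $\log_2\frac{2r+2}{\text{(inner radius)}}+1$ of them, which for the worst case (inner radius small) is dominated by $c\log(2+r)$, and $r^{-\lambda}\log(2+r)$ is bounded on $[1,\infty)$. So $\|\varphi\|_{p,\lambda}<\infty$.

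\textbf{Step 2: $\varphi\in\Vz$ and $\varphi\in\Vi$.} The $\Vz$ claim is immediate from Step~1: for $r\le 1$ we showed $\sup_x\mathfrak{M}_{p,\lambda}(\varphi;x,r)\le c_n r^{n-\lambda}\to 0$ as $r\to 0$. For $\Vi$, we need $\sup_x\mathfrak{M}_{p,\lambda}(\varphi;x,r)\to 0$ as $r\to\infty$. By the annulus argument in Step~1, the number of $B_k$ meeting $B(x,r)$ is at most $c\log(2+r)$ (and in fact the balls contributing all have $2^k\le |x|+r+1$ while those with $2^k\le (|x|-r-1)$ are excluded, so they fill at most $\log_2((|x|+r+1)/\max(1,|x|-r-1))+c$ dyadic scales, bounded by $c\log(2+r)$ uniformly in $x$). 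Thus $\sup_x\mathfrak{M}_{p,\lambda}(\varphi;x,r)\le c_n r^{-\lambda}\log(2+r)\to 0$ as $r\to\infty$ since $\lambda>0$. This is exactly where the hypothesis $\lambda>0$ is used.

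\textbf{Step 3: $\varphi\notin\Vast$ — the main point.} We show $\mathcal{A}_{N,p}(\varphi)\not\to 0$. Fix $N\in\N$ and choose $k$ so large that $2^k > N+1$; then $B_k\subset\Rn\setminus B(0,N)$, so $\chi_N\equiv 1$ on $B_k$. Taking $x = 2^k\mathrm{e}_1$, the center of $B_k$, we get $\int_{B(x,1)}|\varphi(y)|^p\chi_N(y)\,dy \ge \int_{B_k}1\,dy = |B(0,1)| = c_n$, since $\varphi\equiv 1$ on $B_k$ (the $B_k$ are disjoint, so no overlap inflates or matters). Hence $\mathcal{A}_{N,p}(\varphi)\ge c_n>0$ for every $N$, so $\lim_{N\to\infty}\mathcal{A}_{N,p}(\varphi) = c_n\ne 0$ and $\varphi\notin\Vast$. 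Combining the three steps, $\varphi\in(\Vz\cap\Vi)\setminus\Vast$, proving the theorem.

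The only genuine obstacle is bookkeeping in Steps 1--2: one must make the ``at most $c\log(2+r)$ balls'' count fully uniform in $x$, which amounts to the elementary fact that a family of unit balls centered at points $2^k\mathrm{e}_1$ meeting a fixed ball $B(x,r)$ occupy $O(\log(2+r))$ consecutive values of $k$ because consecutive centers are at distance $2^k$, i.e. their mutual separation grows geometrically. Once that is stated cleanly, everything else is a one-line estimate, and Step~3 is essentially trivial — the disjointness and geometric spreading of the $B_k$ are precisely engineered so that the truncation $\chi_N$ never removes all the mass, no matter how large $N$ is.
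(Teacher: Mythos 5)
Your proposal is correct and follows essentially the same route as the paper: the same function $\varphi=\sum_{k\ge 2}\chi_{B_k}$, the same lower bound $\mathcal{A}_{N,p}(\varphi)\ge |B(0,1)|$ via a far-out ball $B_{\bar k}$ with $2^{\bar k}>N+1$, and the same two-regime estimate ($r\le 1$: at most one ball met, giving $r^{n-\lambda}$; $r>1$: a logarithmic ball count killed by $r^{-\lambda}$). The only cosmetic difference is that you package the count as a single uniform dyadic-annulus bound $O(\log(2+r))$, whereas the paper splits into the cases $|x|\le 2r$ and $|x|>2r$; both yield the same conclusion.
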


\begin{proof}
We take the function $\varphi$ from Example \ref{ex:phi},
$$\varphi(x)= \sum_{k=2}^\infty \chi_{B_k}(x)\,,$$
keepping the notation $B_k =B(2^k\textrm{e}_1,1)$. For every $N\in\N$, we have
\begin{eqnarray*}
\mathcal{A}_{N,p}(\varphi) & = & \sup_{x\in\Rn} \sum_{k=2}^\infty \int_{B(x,1)} \chi_{B_k}(y)\,\chi_N(y)\,dy \\
& \geq & \int_{B(2^{\bar{k}}\textrm{e}_1,1)} \chi_{B_{\bar{k}}}(y)\,\chi_N(y)\,dy \\
& = & |B(2^{\bar{k}}\textrm{e}_1,1)| = |B(0,1)|,
\end{eqnarray*}
where $\bar{k}\geq 2$ is any integer chosen such that $2^{\bar{k}}>N+1$. Hence $\varphi$ fails to have property ($V^\ast$).
It remains to show that $\varphi\in \Lh$ and that $\varphi$ has both the vanishing properties ($V_0$) and ($V_\infty$).

For any $x\in\Rn$ and $r\leq 1$, there exists at most one ball $B_{k_0}$ intersecting $B(x,r)$. Then
$$
\mathfrak{M}_{p,\lambda}(\varphi;x,r) =r^{-\lambda} \, \int_{B(x,r)} \chi_{B_{k_0}}(y)\,dy \lesssim r^{n-\lambda},
$$
with the implicit constant not depending on $x$ and $r$. Therefore, noting also that $\lambda\in(0,n)$,
$$\sup_{x\in\Rn}\, \mathfrak{M}_{p,\lambda}(\varphi;x,r) \longrightarrow 0 \ \ \ \text{as} \ \ \ \ r\to 0,$$
which shows that $\varphi$ has the vanishing property at the origin.

Suppose now that $r>1$. We need to count the number of balls intersecting $B(x,r)$. In the case $|x|\leq 2r$ we calculate
$$r+1 \geq |x-2^ke_1|\geq |2^ke_1|-|x| \geq 2^k-2r,$$
so that
$$2^k\leq 3r+1.$$
Therefore, for $|x|\leq 2r$, we get the estimate
\begin{equation}\label{estimate1}
\mathfrak{M}_{p,\lambda}(\varphi;x,r) \leq r^{-\lambda} \, \sum_{k=2}^{[\log(3r)]+1} \left|B(x,r) \cap B_k\right| \leq \frac{\log(3r)}{r^\lambda},
\end{equation}
where $[a]$ stands for the integer part of $a\in\R$.
As before, we should have
$$|x|-r-1\leq 2^k\leq |x|+r+1.$$
Hence, in the case $|x|>2r$, the number of balls $B_k$ intersecting $B(x,r)$ can be estimated by
$$\log(|x|+r+1)-\log(|x|-r)= \log\frac{|x|+r+1}{|x|-r} = \log\left(1+\frac{2r+1}{|x|-r}\right) \leq 2.$$
We then obtain
\begin{equation}\label{estimate2}
\mathfrak{M}_{p,\lambda}(\varphi;x,r) \leq \frac{2}{r^\lambda}.
\end{equation}
Taking into account \eqref{estimate1} and \eqref{estimate2}, we conclude that
$$\sup_{x\in\Rn}\, \mathfrak{M}_{p,\lambda}(\varphi;x,r) \leq \frac{\log(4r)}{r^\lambda} \longrightarrow 0 \ \ \ \text{as} \ \ \ \ r\to \infty,$$
from which the vanishing property at infinity follows (note that $\lambda>0$).
\end{proof}

\begin{corollary}
For any $0<\lambda < n$ and $1\leq p <\infty$ we have
$$
\Vziast \ \subsetneqq \ \Vz\cap\Vi \ \subsetneqq \ \Vz \ \subsetneqq \ \Lh.
$$
\end{corollary}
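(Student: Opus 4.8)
The plan is to note first that the four set inclusions themselves are immediate from the definitions, since by construction
\[
\Vziast=\Vz\cap\Vi\cap\Vast\subseteq\Vz\cap\Vi\subseteq\Vz\subseteq\Lh ,
\]
and then to produce, for each of the three inclusions, a single function witnessing that it is proper. The first strict inclusion $\Vziast\subsetneqq\Vz\cap\Vi$ comes for free from the previous section: the function $\varphi$ built in Example~\ref{ex:phi} and analysed in the proof of Theorem~\ref{theo:phi} belongs to $\Vz\cap\Vi$ but fails property ($V^\ast$), so $\varphi\in(\Vz\cap\Vi)\setminus\Vast\subseteq(\Vz\cap\Vi)\setminus\Vziast$.

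For $\Vz\cap\Vi\subsetneqq\Vz$ I would separate the two sets by the truncated homogeneous function
\[
g(y):=|y|^{-\frac{n-\lambda}{p}}\,\chi_{\{|y|>1\}} .
\]
Here $g\in\Lh$ because $|g(y)|\le|y|^{-\frac{n-\lambda}{p}}$ pointwise and $|y|^{-\frac{n-\lambda}{p}}\in\Lh$ by \eqref{ex:homogeneous}; moreover $0\le g\le 1$, hence $\mathfrak{M}_{p,\lambda}(g;x,r)\le r^{-\lambda}|B(x,r)|=c\,r^{\,n-\lambda}\to 0$ as $r\to0$ uniformly in $x$ (recall $\lambda<n$), so $g\in\Vz$. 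On the other hand a direct computation gives
\[
\mathfrak{M}_{p,\lambda}(g;0,r)=r^{-\lambda}\int_{1<|y|<r}|y|^{-(n-\lambda)}\,dy=c\,\bigl(1-r^{-\lambda}\bigr)\longrightarrow c>0
\]
as $r\to\infty$, so $\sup_{x\in\Rn}\mathfrak{M}_{p,\lambda}(g;x,r)$ does not vanish at infinity and $g\notin\Vi$.

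For $\Vz\subsetneqq\Lh$ the homogeneous function $h(y):=|y|^{-\frac{n-\lambda}{p}}$ works: $h\in\Lh$ by \eqref{ex:homogeneous}, while by the same elementary computation $\mathfrak{M}_{p,\lambda}(h;0,r)=c$ for a positive constant $c$ independent of $r$ (this also just reflects the scaling identity $\|h(t\cdot)\|_{p,\lambda}=t^{\frac{\lambda-n}{p}}\|h\|_{p,\lambda}$ together with $h(t\cdot)=t^{-\frac{n-\lambda}{p}}h$), so $\lim_{r\to0}\sup_{x\in\Rn}\mathfrak{M}_{p,\lambda}(h;x,r)\ge c>0$ and $h\notin\Vz$. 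Alternatively one could quote the limiting case $\alpha=\beta=\tfrac{n-\lambda}{p}$ of Example~\ref{ex:notinV0Vinfty}, where $f_{\alpha,\beta}$ already lies in $\Lh\setminus(\Vz\cup\Vi)$.

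I do not anticipate a genuine obstacle: everything rests on the scaling behaviour of the homogeneous Morrey norm and on estimates no harder than those already carried out in the proof of Theorem~\ref{theo:phi}. The one point that needs a little attention is that Example~\ref{ex:notinV0Vinfty} as literally stated does not supply a function in $\Vz\setminus\Vi$, so one must either verify that $f_{\alpha,\beta}$ with $\alpha<\tfrac{n-\lambda}{p}=\beta$ has both properties ($V^0$) and the failure of ($V^\infty$), or, as above, argue directly with the bounded truncated function $g$.
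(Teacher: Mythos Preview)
Your argument is correct. The paper states this corollary without proof, relying implicitly on Theorem~\ref{theo:phi} for the first strict inclusion and on the limiting case of Example~\ref{ex:notinV0Vinfty} for the third, exactly as you do. You are also right to flag the middle inclusion $\Vz\cap\Vi\subsetneqq\Vz$: the examples explicitly recorded in the paper do not literally supply a function in $\Vz\setminus\Vi$, and your bounded truncation $g(y)=|y|^{-\frac{n-\lambda}{p}}\chi_{\{|y|>1\}}$ is a clean way to fill this gap, with the $(V^0)$ property following trivially from $0\le g\le 1$ and the failure of $(V^\infty)$ from the direct radial computation you give. The alternative you mention---checking $f_{\alpha,\beta}$ with $\alpha<\tfrac{n-\lambda}{p}=\beta$---would also work but requires a small extra estimate for $(V^0)$ near the singularity at the origin, so your choice of $g$ is the more economical one.
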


The next result (and its corollary) shows that the new subspace $\Vziast$ is strictly smaller than Zorko subspace $\Zor$ (from \eqref{zorko-subs}). Its proof  uses the fact that Morrey functions having all the vanishing properties above can always be approximated by compactly supported functions. However, for convenience, this claim will be proved later in Section~\ref{sec:approx-subspaces}.

\begin{theorem}
For any $0\leq\lambda < n$ and $1\leq p < \infty$ we have $\Vziast \subset \Zor$.
\end{theorem}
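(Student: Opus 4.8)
The plan is to deduce this inclusion quickly from the approximation theorem of Section~\ref{sec:approx-subspaces}, namely that every $f\in\Vziast$ belongs to the closure of $C_0^\infty$ in $\Lh$. Granting that, and since (as recalled in the excerpt) $\Zor$ is closed in $\Lh$, it suffices to verify the elementary inclusion $C_0^\infty\subset\Zor$: then $\Vziast$ is contained in the closure of $C_0^\infty$, which is contained in the closure of $\Zor$, which equals $\Zor$. Alternatively one avoids invoking closedness of $\Zor$ by a direct three-$\varepsilon$ estimate: given $f\in\Vziast$ and $\varepsilon>0$, pick $g\in C_0^\infty$ with $\|f-g\|_{p,\lambda}<\varepsilon/3$, and use that the homogeneous Morrey norm is translation invariant --- immediate from the change of variables $B(x,r)\mapsto B(x-\xi,r)$ in $\mathfrak{M}_{p,\lambda}(\,\cdot\,;x,r)$ together with the supremum over $x\in\Rn$ --- to get
\[
\|\tau_\xi f-f\|_{p,\lambda}\le\|\tau_\xi(f-g)\|_{p,\lambda}+\|\tau_\xi g-g\|_{p,\lambda}+\|g-f\|_{p,\lambda}=2\,\|f-g\|_{p,\lambda}+\|\tau_\xi g-g\|_{p,\lambda}.
\]
Either way, everything reduces to continuity of translations on $C_0^\infty$ functions in Morrey norm.

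For that I would record the elementary bound that, for any $k\in L^p\cap L^\infty$,
\[
\|k\|_{p,\lambda}\le c\,\|k\|_\infty^{\lambda/n}\,\|k\|_p^{(n-\lambda)/n},
\]
which follows by estimating $\mathfrak{M}_{p,\lambda}(k;x,r)=r^{-\lambda}\int_{B(x,r)}|k|^p\le r^{-\lambda}\min\!\big(c_n\|k\|_\infty^p\,r^n,\;\|k\|_p^p\big)$ and optimizing over $r>0$ (here $0\le\lambda<n$ is used; for $\lambda=0$ the inequality is just $\|k\|_p\le\|k\|_p$). Applying it to $k=\tau_\xi g-g$ with $g\in C_0^\infty$ and using that $\|\tau_\xi g-g\|_\infty\to0$ (uniform continuity of $g$) and $\|\tau_\xi g-g\|_p\to0$ (continuity of translations in $L^p$) as $\xi\to0$, we obtain $\|\tau_\xi g-g\|_{p,\lambda}\to0$, i.e.\ $g\in\Zor$. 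Combining with the display above and choosing $|\xi|$ small enough that $\|\tau_\xi g-g\|_{p,\lambda}<\varepsilon/3$ yields $\|\tau_\xi f-f\|_{p,\lambda}<\varepsilon$, hence $f\in\Zor$.

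The only substantial ingredient is the approximation result imported from Section~\ref{sec:approx-subspaces}, which is the heart of the paper and the single place where all three vanishing conditions $(V^0)$, $(V^\infty)$, $(V^\ast)$ are really used; the rest of the argument is routine. A minor point to watch: if that result is stated with merely \emph{bounded compactly supported} approximants rather than $C_0^\infty$ ones, I would first mollify such an approximant $h$ (supported in some ball $B(0,R)$), using $\|h\ast\rho_\delta-h\|_\infty\le2\|h\|_\infty$, $\|h\ast\rho_\delta-h\|_p\to0$ and the same interpolation-type inequality, to upgrade it to a $C_0^\infty$ approximant in Morrey norm; after that the argument proceeds unchanged.
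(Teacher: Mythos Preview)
Your argument has a circularity with respect to the paper's logical structure. The $C_0^\infty$ approximation result you invoke (Theorem~\ref{theo:approx-main}) is proved in two steps, and Step~1 --- passing from a compactly supported $g\in\Vziast$ to $g\ast\phi_t$ --- appeals to \eqref{approx}, whose derivation assumes the function already lies in $\Zor$. In the paper this is justified precisely by the present theorem; so deducing $\Vziast\subset\Zor$ from the full Theorem~\ref{theo:approx-main} is circular. Your fallback (``if the approximants are merely bounded compactly supported, mollify them'') does not close the gap either: the approximants produced by Step~2 are the truncations $g=f\chi_{B(0,k)}$, which are compactly supported in $\Lh$ but in general neither bounded nor smooth, so your interpolation bound $\|k\|_{p,\lambda}\lesssim\|k\|_\infty^{\lambda/n}\|k\|_p^{(n-\lambda)/n}$ cannot be applied to $\tau_\xi g-g$, and mollifying $g$ runs into the same Zorko-type convergence question you are trying to establish.

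The paper avoids the circularity by importing only Step~2 of Theorem~\ref{theo:approx-main} (which is self-contained) and then proving $\|\tau_\xi g-g\|_{p,\lambda}\to0$ \emph{directly} for the compactly supported truncation $g$: the ranges $r<r_0$ and $r>R$ are controlled by the $(V^0)$ and $(V^\infty)$ properties that $g$ inherits pointwise from $f$, while for $r\in[r_0,R]$ the compact support reduces matters to $L^p$-continuity of translation on a fixed ball. Your interpolation inequality is a clean device and does give an immediate proof that $C_0^\infty\subset\Zor$; but to turn your outline into a non-circular proof you would still need an independent argument that compactly supported functions in $\Vz\cap\Vi$ satisfy the Zorko property --- and that is exactly what the paper's proof supplies.
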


\begin{proof}
Let $f\in\Vziast$. For any $\varepsilon>0$ there exists $g\in \Lh$ with compact support such that
$$
\|f-g\|_{p,\lambda} < \varepsilon/4
$$
(cf. Step~2 in the proof of Theorem \ref{theo:approx-main}). Since for any $\xi\in\Rn$ we have
$$\|\tau_\xi f-f\|_{p,\lambda} \leq 2\, \|f-g\|_{p,\lambda} + \|\tau_\xi g-g\|_{p,\lambda}$$
it suffices to show that the second norm is less that $\varepsilon/2$ for small values of $|\xi|$. By the ($V_0$) and ($V_\infty$) properties, there exist $r_0, R>0$ such that
$$
S_1:= \sup_{x\in\Rn\atop 0<r<r_0} \mathfrak{M}(\tau_\xi g-g; x,r) \leq 2^p \sup_{x\in\Rn\atop 0<r<r_0} \mathfrak{M}(g; x,r) < (\varepsilon/2)^p
$$
and
$$
S_2:= \sup_{x\in\Rn\atop r>R} \mathfrak{M}(\tau_\xi g-g; x,r) \leq 2^p \sup_{x\in\Rn\atop r>R} \mathfrak{M}(g; x,r) < (\varepsilon/2)^p.
$$
For such fixed $r_0$ and $R$, we estimate
$$
\|\tau_\xi g-g\|^p_{p,\lambda} \leq \max\{S_1, S_2, S_3\}
$$
with
$$
S_3:= \sup_{x\in\Rn\atop r_0\leq r\leq R} \mathfrak{M}(\tau_\xi g-g; x,r).
$$
We have
$$
S_3 \lesssim \sup_{x\in\Rn} \int_{B(x,R)}|g(y-\xi)-g(y)|^p\,dy \leq \max\left\{ \sup_{|x|<M} (\cdots)\,, \sup_{|x|>M} (\cdots) \right\}.
$$
where $M>0$ is chosen below. Since $g$ has compact support there exists $K>0$ such that $g(u)=0$ if $|u|>K$. In the case $|x|>M$ we have
$$
\int_{|y-x|<R}|g(y-\xi)-g(y)|^p\,dy = \int_{|z|<R}|g(z+x-\xi)-g(z+x)|^p\,dz.
$$
Hence, if we choose $M>R+K+1$ then $g(z+x)=0$ and $g(z+x-\xi)=0$ for small values of $|\xi|$, say $|\xi|<1$, since
$$|z+x|>M-R>K \ \ \ \ \text{and} \ \ \ \  |z+x-\xi|\geq |z+x|-|\xi|>K.$$
Let then $M>R+K+1$ be fixed and let us now estimate the integral when $|x|<M$. In this case we are just taking the $L^p$-norm on a ball centered at the origin with fixed radius, precisely $B(0,R+M+1)$, again for $|\xi|<1$.  Therefore we also obtain
$$S_3 < (\varepsilon/2)^p$$
by the continuity of the $L^p$-norm with respect to translations.
\end{proof}

\begin{remark}
From the previous proof, we see that for any $\varepsilon>0$ and $f\in \Vz\cap \Vi$, there are $r_0,R>0$ such that
$$
\|\tau_\xi f-f\|^p_{p,\lambda} \leq \max\{\varepsilon, S_{r_0,R}(\xi)\}
$$
for all $\xi\in\Rn$, where
$$S_{r_0,R}(\xi)=r_0^{-\lambda}\,\sup_{x\in\Rn} \int_{B(x,R)} |f(y-\xi)-f(y)|^p\,dy.$$
 Thus, for Morrey functions in the intersection $\Vz\cap \Vi$, the Zorko property is reduced to the continuity of the translation operator in the uniform Lebesgue space $\mathcal{L}^p$. Note also that the vanishing property ($V^\ast$) ensures the continuity of the translation operator in $\mathcal{L}^p$, and consequently Zorko property.
\end{remark}

\begin{corollary}
For any $0<\lambda < n$ and $1\leq p <\infty$ there holds
$$
\Vziast \ \subsetneqq \ \Zor \ \subsetneqq \ \Vz \ \subsetneqq \ \Lh.
$$
\end{corollary}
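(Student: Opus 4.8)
Since the three inclusions $\Vziast\subseteq\Zor\subseteq\Vz\subseteq\Lh$ are already in hand — the first is the preceding theorem, the second was recorded in \eqref{zorvzmor} (cf. \cite{Kato92}), and the third is immediate from the definition of $\Vz$ — the whole statement reduces to showing that each of the three inclusions is proper. For two of them the needed counterexamples are essentially already present in the paper.

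For $\Vz\subsetneqq\Lh$, take the homogeneous function $h(x)=|x|^{-(n-\lambda)/p}$. By \eqref{ex:homogeneous} it lies in $\Lh$, but $\mathfrak{M}_{p,\lambda}(h;0,r)$ equals a positive constant independent of $r$, so $\sup_{x\in\Rn}\mathfrak{M}_{p,\lambda}(h;x,r)\not\to0$ as $r\to0$ and $h\notin\Vz$ (equivalently, use $f_{\alpha,\beta}$ from Example \ref{ex:notinV0Vinfty} with $\alpha=\beta=(n-\lambda)/p$). For $\Vziast\subsetneqq\Zor$, take the function $\varphi$ of Example \ref{ex:phi}: by Theorem \ref{theo:phi} we have $\varphi\in\Vz\cap\Vi$ but $\varphi\notin\Vast$, hence $\varphi\notin\Vziast$. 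On the other hand $\varphi\in\Zor$: since $\varphi\in\Vz\cap\Vi$, by the preceding remark it suffices to check continuity of translations in the uniform Lebesgue space $\mathcal{L}^p$, and writing $\tau_\xi\varphi-\varphi=\sum_{k\ge2}(\chi_{B_k+\xi}-\chi_{B_k})$ one observes that for $|\xi|<1$ every ball $B(x,1)$ meets only boundedly many of the sets $B_k\triangle(B_k+\xi)$, because the $B_k=B(2^k\textrm{e}_1,1)$ are widely separated; therefore $\sup_{x}\int_{B(x,1)}|\tau_\xi\varphi-\varphi|^p\,dy\lesssim|B(0,1)\triangle(B(0,1)+\xi)|\to0$ as $\xi\to0$. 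Thus $\varphi\in\Zor\setminus\Vziast$.

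It remains to show $\Zor\subsetneqq\Vz$; that the inclusions in \eqref{zorvzmor} are strict for unbounded domains was already recalled in Section \ref{sec:prelim} (see \cite{Zor86,Kato92}), so in principle nothing more is needed. For concreteness, here is a function $f\in\Vz\setminus\Zor$: fix widely separated centres $v_k=2^{2k}\textrm{e}_1$ and set
\[
f(x)=\sum_{k\ge1}\sin\!\big(2^{k}(x_1-2^{2k})\big)\,\chi_{B(v_k,1)}(x).
\]
Each summand is bounded by $1$ and supported in a fixed-radius ball, so $\sup_{x}\mathfrak{M}_{p,\lambda}(f;x,r)\lesssim r^{n-\lambda}\to0$ as $r\to0$, giving $f\in\Vz$; and a ball $B(x,r)$ meets only $\lesssim\log r$ of the supports, whence $\mathfrak{M}_{p,\lambda}(f;x,r)\lesssim r^{-\lambda}\log r$ and $f\in\Lh$ (in fact $f\in\Vi$ too). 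But with $\xi_m=2^{-m}\pi\,\textrm{e}_1\to0$ one has $\tau_{\xi_m}f-f=-2\sin\!\big(2^{m}(x_1-2^{2m})\big)$ on $B(v_m,1)\cap B(v_m+\xi_m,1)$, so that
\[
\|\tau_{\xi_m}f-f\|_{p,\lambda}^{p}\ \ge\ \mathfrak{M}_{p,\lambda}(\tau_{\xi_m}f-f;v_m,1)\ \ge\ 2^{p}\!\!\int_{B(0,1)\cap B(\xi_m,1)}\!\!|\sin(2^m y_1)|^{p}\,dy\ \gtrsim\ 1
\]
for all large $m$, by equidistribution of $\sin(2^m y_1)$ over $B(0,1)$. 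Hence translations are discontinuous on $f$ and $f\notin\Zor$.

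The only genuinely delicate point is this last strictness $\Zor\subsetneqq\Vz$. The obstruction is the homogeneity of $\Lh$: a non-decaying feature of a Morrey function is ``seen'' only at a scale comparable to its distance from the origin, which forces the obvious moving-bump candidates (bounded amplitudes on far-away fixed-size balls) to lie in $\Zor$; one has to let the oscillation frequency grow along the sequence of bumps in order to defeat translation continuity uniformly, as above. The remaining two strictnesses, and all three inclusions, follow directly from material already in the paper.
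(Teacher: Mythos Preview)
Your argument is correct and follows the same skeleton the paper has in mind: the three inclusions come from the preceding theorem, from \eqref{zorvzmor}, and from the definition; the outer strictnesses $\Vz\subsetneqq\Lh$ and $\Vziast\subsetneqq\Zor$ are witnessed by $|x|^{-(n-\lambda)/p}$ and by the function $\varphi$ of Example~\ref{ex:phi}, exactly as the paper intends. The paper itself states the corollary without proof, relying on the reader to assemble these pieces.

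You go beyond the paper in two places. First, you explicitly verify $\varphi\in\Zor$ by invoking the preceding Remark to reduce to $\mathcal{L}^p$-continuity of translations and then checking this directly for the lacunary sum of indicator functions; the paper leaves this implicit. Second, for $\Zor\subsetneqq\Vz$ the paper simply defers to the literature (the sentence after \eqref{zorvzmor} about unbounded domains), whereas you construct an explicit witness $f=\sum_{k}\sin(2^k(x_1-2^{2k}))\chi_{B(v_k,1)}$ with $v_k=2^{2k}\textrm{e}_1$. That construction is sound: the $V_0$ and $\Lh$ (indeed $V_\infty$) estimates follow by the same ball-counting argument used for $\varphi$ in Theorem~\ref{theo:phi}, and the choice $\xi_m=2^{-m}\pi\,\textrm{e}_1$ flips the sign of the $m$-th summand on $B(v_m,1)\cap(B(v_m,1)+\xi_m)$, so equidistribution of $|\sin(2^m y_1)|^p$ gives a uniform lower bound on $\|\tau_{\xi_m}f-f\|_{p,\lambda}$. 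This explicit example, and your closing observation that constant-frequency bumps cannot work because they land in $\Zor$, are a genuine addition to what the paper provides.
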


\section{Approximation in Morrey subspaces}\label{sec:approx-subspaces}

As observed by Zorko \cite{Zor86} there are functions in $\Lh$ that cannot be approximated even by continuous functions. It is the case of functions with the form  $f_{x_0}(x)=|x-x_0|^{\frac{\lambda-n}{p}}$ for fixed $x_0\in\Rn$. It should be mentioned that the Morrey norm in \cite{Zor86} was taken as the sum $\|\cdot\|_p + \|\cdot \|_{p,\lambda}$. In particular, Morrey functions there were supposed to be in $L^p$ which is not necessarily the case in the approach followed in our paper. Anyway, it was the failure in approximation by nice functions that motivated the introduction of the subspace $\Zor$.

In the intersection $\Lh \cap L^p$ standard approximations using mollifiers can be used in order to approximate functions with Zorko property by $C_0^\infty$ functions (cf. \cite[Theorem~2.3]{Adam15}). However, things change when we deal with the Morrey space $\Lh$, $\lambda>0$,  with the norm defined by \eqref{HomMorreynorm}, since then we cannot approximate all Morrey functions by regular compactly supported functions.

Consider the usual dilations $\phi_t(x)= t^{-n}\phi(x/t)$, $t>0$, where $\phi$ is an integrable function with $\|\phi\|_1=1$. Suppose that $f\in\Zor$. Standard calculations and Minkowski's integral inequality yield
\begin{equation*}
\|f\ast \phi_t-f\|_{p,\lambda} \leq \int_{\Rn} \|\tau_{t z}f - f\|_{p,\lambda} \,|\phi(z)|\,dz.
\end{equation*}
Since $\|\tau_{t z}f - f\|_{p,\lambda}$ as $t\to 0$ and $\|\tau_{t z}f - f\|_{p,\lambda} \leq 2  \|f\|_{p,\lambda}$ for any $t>0$ and $z\in\Rn$, by the Lebesgue theorem we get
\begin{equation}\label{approx}
 \|f\ast \phi_t - f\|_{p,\lambda} \rightarrow 0 \ \ \ \ \text{as} \ \ \ \ t\to 0.
\end{equation}
If we take smooth kernels, say $\phi\in\cS$, then the mollifiers $f\ast\phi_t \in \Zor\cap C^\infty$ for any $t>0$. Consequently, we derive the following result:
\begin{theorem}
Let $0\leq \lambda < n$ and $1\leq p < \infty$. Then every Morrey function with Zorko property can be approximated in Morrey norm by $C^\infty$ functions. Moreover, we have $$\overline{\Zor\cap C^\infty} = \Zor.$$
\end{theorem}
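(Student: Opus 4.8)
The statement packages two assertions: that every $f \in \Zor$ can be reached in Morrey norm by elements of $\Zor \cap C^\infty$, and the identity $\overline{\Zor \cap C^\infty} = \Zor$. The second is essentially formal once the first is in hand, so the plan is to concentrate on the approximation, reusing the estimate already obtained just above the statement.

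First I would fix a standard mollifier $\phi \in C_0^\infty$ with $\phi \ge 0$ and $\|\phi\|_1 = 1$, and form its dilations $\phi_t(x) = t^{-n}\phi(x/t)$, $t>0$; a Schwartz kernel $\phi\in\cS$ would serve equally well. For a given $f \in \Zor$, I would then record three facts about the mollifier $f * \phi_t$. The convergence $\|f * \phi_t - f\|_{p,\lambda} \to 0$ as $t \to 0^+$ is exactly \eqref{approx}, which follows from Minkowski's integral inequality, the Zorko property of $f$, and the Lebesgue dominated convergence theorem. Smoothness of $f * \phi_t$ is immediate, since $f \in \Lh \subset L^1_{\loc}$ and $\phi_t \in C_0^\infty$ (with a Schwartz kernel one would instead invoke $\Lh \hookrightarrow \cS'$). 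Finally, $f * \phi_t$ again enjoys the Zorko property, by the Young-type inequality \eqref{Young}:
\[
\|\tau_\xi(f * \phi_t) - f * \phi_t\|_{p,\lambda} = \|(\tau_\xi f - f) * \phi_t\|_{p,\lambda} \le \|\tau_\xi f - f\|_{p,\lambda} \to 0 \quad \text{as } \xi \to 0 .
\]
Hence $f * \phi_t \in \Zor \cap C^\infty$ for every $t > 0$, and letting $t \to 0^+$ yields $f \in \overline{\Zor \cap C^\infty}$.

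For the identity, the inclusion $\overline{\Zor \cap C^\infty} \subseteq \Zor$ is immediate: $\Zor \cap C^\infty \subseteq \Zor$ and $\Zor$ is closed in $\Lh$ (as recalled in Section~\ref{sec:prelim}), so passing to closures preserves the inclusion. The reverse inclusion $\Zor \subseteq \overline{\Zor \cap C^\infty}$ is precisely the approximation carried out in the previous paragraph, whence $\overline{\Zor \cap C^\infty} = \Zor$.

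I do not anticipate a genuine obstacle: the analytic heart of the matter, the convergence \eqref{approx}, is already in place, and what remains are the two structural checks on $f * \phi_t$ — smoothness and membership in $\Zor$ — each a one-line consequence of the mollifier being compactly supported (or of $\Lh \hookrightarrow \cS'$) and of \eqref{Young}, respectively. The one point worth stressing in the write-up is why one obtains only $C^\infty$ and not $C_0^\infty$: the function $f$ itself need not have compact support, so $f * \phi_t$ need not either, no matter which mollifier is used. Recovering approximation by genuinely compactly supported smooth functions is exactly what forces the additional vanishing conditions defining $\Vziast$, and that is the purpose of the finer argument leading to Theorem~\ref{theo:approx-main}.
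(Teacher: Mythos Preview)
Your proposal is correct and follows essentially the same route as the paper: mollify with a smooth kernel, invoke \eqref{approx} for convergence, and observe that $f\ast\phi_t\in\Zor\cap C^\infty$. You actually supply more detail than the paper does, explicitly verifying the Zorko property of $f\ast\phi_t$ via \eqref{Young} and spelling out the closure identity using the closedness of $\Zor$, whereas the paper simply asserts these points in the discussion preceding the theorem.
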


We discuss now the approximation of Morrey functions having both vanishing properties at the origin and at infinity.

\begin{theorem}\label{uniform-continuous}
Let $0\leq \lambda < n$ and $1\leq p < \infty$. If $f\in\Vz\cap\Vi$ is uniformly continuous then $f$ can be approximated in Morrey norm by functions from $\Vz\cap\Vi \cap C^\infty$.
\end{theorem}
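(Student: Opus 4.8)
The plan is to use the standard mollification $f\ast\phi_t$, with $\phi_t(x)=t^{-n}\phi(x/t)$, $\phi\in\cS$ and $\|\phi\|_1=1$, and to verify three things: (i) $f\ast\phi_t\in C^\infty$; (ii) $\|f\ast\phi_t-f\|_{p,\lambda}\to 0$ as $t\to 0$; and (iii) $f\ast\phi_t\in\Vz\cap\Vi$ for each $t>0$. Parts (i) and (iii) are essentially free: smoothness is classical, and Theorem~\ref{theo:invariant} tells us that convolution with the integrable kernel $\phi_t$ preserves both vanishing properties, so $f\ast\phi_t\in\Vz\cap\Vi\cap C^\infty$. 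The whole difficulty sits in (ii), because $f$ is not assumed to have the Zorko property, so the approximation theorem for $\Zor$ cannot be quoted as it stands.

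The key step I would carry out is to show that uniform continuity already forces $f$ into $\Zor$. Let $\omega_f$ be the modulus of continuity of $f$, so $|f(y-\xi)-f(y)|\leq\omega_f(|\xi|)$ for all $y$, with $\omega_f(\delta)\to 0$ as $\delta\to 0^+$. Fix $\varepsilon>0$. By the Remark following the embedding $\Vziast\subset\Zor$, there are $r_0,R>0$ (depending on $f$ and $\varepsilon$) with
$$
\|\tau_\xi f-f\|_{p,\lambda}^p \leq \max\big\{\varepsilon,\, S_{r_0,R}(\xi)\big\}, \qquad \xi\in\Rn,
$$
where $S_{r_0,R}(\xi)=r_0^{-\lambda}\sup_{x\in\Rn}\int_{B(x,R)}|f(y-\xi)-f(y)|^p\,dy$. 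Since the integrand is bounded by $\omega_f(|\xi|)^p$ uniformly in $x$ and $y$, we get $S_{r_0,R}(\xi)\leq r_0^{-\lambda}\,|B(0,R)|\,\omega_f(|\xi|)^p\to 0$ as $\xi\to 0$. Hence $\limsup_{\xi\to 0}\|\tau_\xi f-f\|_{p,\lambda}^p\leq\varepsilon$, and letting $\varepsilon\to 0$ yields $\|\tau_\xi f-f\|_{p,\lambda}\to 0$, i.e. $f\in\Zor$.

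With $f\in\Zor$ in hand, the convergence $\|f\ast\phi_t-f\|_{p,\lambda}\to 0$ as $t\to 0$ follows from the computation carried out just before the approximation theorem for $\Zor$: Minkowski's integral inequality gives $\|f\ast\phi_t-f\|_{p,\lambda}\leq\int_{\Rn}\|\tau_{tz}f-f\|_{p,\lambda}\,|\phi(z)|\,dz$, and the dominated convergence theorem applies because $\|\tau_{tz}f-f\|_{p,\lambda}\to 0$ and is bounded by $2\|f\|_{p,\lambda}$. Together with (i) and (iii) this proves the theorem.

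The main obstacle is precisely this link from uniform continuity to the Zorko property; the rest is bookkeeping. An alternative, self-contained route that avoids quoting the Remark is to estimate $\|f\ast\phi_t-f\|_{p,\lambda}$ directly, splitting the supremum over radii into the ranges $r<r_0$, $r>R$ and $r_0\leq r\leq R$: on the first two the $(V_0)$ and $(V_\infty)$ properties of $f$ (inherited by $f\ast\phi_t-f$ up to a factor $2^p$, exactly as in the proof of $\Vziast\subset\Zor$) do the job, while on the intermediate range the radius is fixed and one uses $\|\phi_t\|_1=1$ together with uniform continuity of $f$ to make $\int_{B(x,R)}|(f\ast\phi_t)(y)-f(y)|^p\,dy$ small uniformly in $x$. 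I would present whichever version reads more cleanly; the Remark-based argument is shorter.
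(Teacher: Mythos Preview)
Your proposal is correct. Your primary route---first showing that uniform continuity together with $(V_0)$ and $(V_\infty)$ forces $f\in\Zor$, via the Remark's bound $\|\tau_\xi f-f\|_{p,\lambda}^p\leq\max\{\varepsilon,S_{r_0,R}(\xi)\}$ and the pointwise estimate $S_{r_0,R}(\xi)\leq r_0^{-\lambda}|B(0,R)|\,\omega_f(|\xi|)^p$---is a genuinely different organisation from the paper's proof. The paper does not pass through $\Zor$ at all; instead it estimates $\mathfrak{M}_{p,\lambda}(f\ast\phi_t-f;x,r)$ directly: Minkowski's inequality puts the $z$-integral outside, the ranges $r<r_0$ and $r>R$ are dismissed by $(V_0)$ and $(V_\infty)$, and in the intermediate range the outer integral is split into $|z|<\delta$ (handled by uniform continuity) and $|z|\geq\delta$ (handled by the decay $\int_{|z|\geq\delta}|\phi_t(z)|\,dz\lesssim t$ coming from $\phi\in\cS$). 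This is exactly the ``alternative, self-contained route'' you sketch at the end. What your main argument buys is a cleaner factorisation: once $f\in\Zor$ is established, the convergence of mollifiers is a one-line citation, and one never needs the Schwartz decay of $\phi$ (any integrable kernel would do). The paper's version, on the other hand, avoids relying on the Remark, whose placement in the paper is slightly delicate since it follows a proof containing a forward reference.
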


\begin{proof}
Let $\phi\in\cS$ with $\|\phi\|_1=1$. Then $f\ast\phi_t \in \Vz\cap\Vi\cap C^\infty$ for any $t>0$ by Theorem~\ref{theo:invariant}. Thus it remains to show that $f\ast \phi_t \rightarrow f$ in $\Lh$ as $t \to 0$.

Let $\varepsilon>0$. For any $x\in\Rn$, $r>0$ and $t>0$, we have
$$
\left(\int_{B(x,r)} |(f\ast\phi_t)(y)-f(y)|^p\,dy \right)^{1/p} \leq \int_{\Rn} |\phi_t(z)| \left(\int_{B(x,r)} |f(y-z)-f(y)|^p\,dy \right)^{1/p} dz.
$$
Since $f\in\Vz\cap\Vi$, there are $r_0, R>0$ such that
$$
\frac{1}{r^\lambda} \int_{B(x,r)} |f(y-z)-f(y)|^p\,dy < \varepsilon
$$
for every $r<r_0$ or $r>R$ (and all $x,z\in\Rn$). Thus we have
\begin{equation}\label{aux1}
\sup_{r>0}\frac{1}{r^\lambda} \int_{B(x,r)} |(f\ast\phi_t)(y)-f(y)|^p\,dy \leq \max\{ \varepsilon, S_{r_0,R}(x,t)\}
\end{equation}
where
$$
S_{r_0,R}(x,t) := \int_{\Rn} |\phi_t(z)| \left(\frac{1}{r_0^\lambda}\int_{B(x,R)} |f(y-z)-f(y)|^p\,dy \right)^{1/p} dz.
$$
By the uniform continuity of $f$ one can find $\delta>0$ such that $|f(y-z)-f(y)| < \varepsilon$ for any $y$ and $z$ with $|z|<\delta$. For such fixed $\delta$ we split the outer integral above into
\begin{equation}\label{split}
S_{r_0,R}(x,t) = \int_{|z|<\delta} (\cdots)\,dz + \int_{|z|\geq\delta} (\cdots)\,dz.
\end{equation}
For the first integral we use the uniform continuity of $f$ and get
\begin{equation}\label{aux2}
\int_{|z|<\delta} (\cdots)\,dz \leq \varepsilon \, r_0^{-\lambda}\, |B(x,R)| \,\int_{|z|<\delta} |\phi_t(z)|\,dz \leq |B(0,1)|\, r_0^{-\lambda}\, R^n\,\varepsilon
\end{equation}
for every $x\in\Rn$ and $t>0$. In the second integral in \eqref{split}, we use the fact that $f\in\Lh$ and $\phi\in\cS$ to derive the inequality
\begin{equation}\label{aux3}
\int_{|z|\geq \delta} (\cdots)\,dz \leq c\,r_0^{-\lambda}\, R^\lambda\,t \,\|f\|_{p,\lambda}
\end{equation}
with $c>0$ depending only on $\phi$, $n$ and $\delta$. In particular, we used the estimate
$$
\int_{|z|\geq \delta} |\phi_t(z)|\,dz \lesssim \int_{|z|\geq \delta} \frac{t}{|z|^{n+1}}\,dz \lesssim t\,,
$$
where the implicit constant is independent of $t$. Using \eqref{aux2} and \eqref{aux3} in \eqref{split}, from \eqref{aux1} we obtain
$$
\sup_{x\in\Rn\atop r>0}\mathfrak{M}_{p,\lambda}(f\ast\phi_t-f;x,r) \lesssim \varepsilon
$$
for sufficiently small $t>0$. This implies $\|f\ast \phi_t - f\|_{p,\lambda} \rightarrow 0$ as $t\to 0$, and hence the proof is complete.
\end{proof}

Finally we discuss the approximation of Morrey functions having all the vanishing properties.


%

\begin{theorem}\label{theo:approx-main}
Let $0\leq \lambda <n$ and $1\leq p<\infty$. Then every function in $\Vziast$ can be approximated in Morrey norm by $C_0^\infty$ functions.
\end{theorem}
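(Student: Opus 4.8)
The plan is to approximate any $f\in\Vziast$ in two stages: first replace $f$ by a compactly supported function $g\in\Lh$ with $\|f-g\|_{p,\lambda}$ as small as we wish, and then mollify $g$; all three vanishing conditions defining $\Vziast$ are spent in the first stage. Fix $\varepsilon>0$. For $N\in\N$ write $f-f\,\chi_{B(0,N)}=f\,\chi_N$ with $\chi_N=\chi_{\Rn\setminus B(0,N)}$ as in Definition~\ref{def:vast}. I would show that $\|f\,\chi_N\|_{p,\lambda}\to0$ as $N\to\infty$ by splitting the supremum in $\|f\,\chi_N\|_{p,\lambda}^p=\sup_{x\in\Rn,\,r>0}r^{-\lambda}\int_{B(x,r)}|f(y)|^p\chi_N(y)\,dy$ into three ranges of $r$: on $0<r\le r_0$ the pointwise bound $r^{-\lambda}\int_{B(x,r)}|f|^p\chi_N\le\mathfrak{M}_{p,\lambda}(f;x,r)$ together with property ($V^0$) gives a contribution $<\varepsilon$ once $r_0$ is fixed small; on $r>R$ property ($V^\infty$) gives a contribution $<\varepsilon$ in the same way once $R>r_0$ is fixed large; and on $r_0\le r\le R$ one uses $r^{-\lambda}\le r_0^{-\lambda}$ together with $\int_{B(x,r)}|f|^p\chi_N\le\sup_{z\in\Rn}\int_{B(z,R)}|f|^p\chi_N$, which tends to $0$ as $N\to\infty$ by Lemma~\ref{lem:vast}. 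Hence $\|f\,\chi_N\|_{p,\lambda}^p\le\varepsilon$ for $N$ large, so $g:=f\,\chi_{B(0,N)}$ is a compactly supported element of $\Lh$ with $\|f-g\|_{p,\lambda}<\varepsilon/2$.

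For the second stage, observe that $\mathfrak{M}_{p,\lambda}(g;\cdot)\le\mathfrak{M}_{p,\lambda}(f;\cdot)$ pointwise, so $g$ inherits properties ($V^0$) and ($V^\infty$) from $f$, and having compact support $g$ also lies in $L^p$. I would then check $g\in\Zor$ by repeating the same three-range split for $\mathfrak{M}_{p,\lambda}(\tau_\xi g-g;x,r)\le2^p\sup_{z\in\Rn}\mathfrak{M}_{p,\lambda}(g;z,r)$ on the ranges $r<r_0$ and $r>R$, and using $\mathfrak{M}_{p,\lambda}(\tau_\xi g-g;x,r)\le r_0^{-\lambda}\|\tau_\xi g-g\|_p^p\to0$ as $\xi\to0$, uniformly in $x$, on $r_0\le r\le R$, by continuity of translations in $L^p$ --- this is precisely the reduction recorded in the Remark in Section~\ref{sec:strictembed}. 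With $g\in\Zor$, the estimate \eqref{approx} yields $\|g\ast\phi_t-g\|_{p,\lambda}\to0$ as $t\to0$ for any kernel $\phi$ with $\|\phi\|_1=1$; taking $\phi\in C_0^\infty$ we moreover get $g\ast\phi_t\in C_0^\infty$, since $\supp(g\ast\phi_t)\subseteq\supp g+\supp\phi_t$ is bounded. Choosing $t$ so small that $\|g\ast\phi_t-g\|_{p,\lambda}<\varepsilon/2$ and adding the two estimates completes the proof.

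I expect the first stage to be the crux. No single one of the three vanishing conditions suffices there: arbitrarily small radii are controlled only by ($V^0$), arbitrarily large radii only by ($V^\infty$), and the remaining bounded band of radii is exactly what property ($V^\ast$), through Lemma~\ref{lem:vast}, is designed to handle --- this is the structural reason why $\Vziast$, rather than a smaller intersection, is the right space. A secondary subtlety is the order of operations: one cannot mollify first, because the convergence $f\ast\phi_t\to f$ in Morrey norm already presupposes $f\in\Zor$ (a fact whose own proof rests on the compact-support reduction above), and one cannot truncate a smooth mollifier by a sharp cutoff without destroying smoothness; hence the sequence truncate-then-mollify, and the attendant need to verify that the compactly supported truncation is itself a Zorko function --- which is where properties ($V^0$) and ($V^\infty$) are used a second time.
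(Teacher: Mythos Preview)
Your proposal is correct and follows essentially the same approach as the paper: first truncate $f$ to a ball using all three vanishing properties (your three-range split in $r$ is exactly the paper's Step~2), then mollify the compactly supported truncation after checking it lies in $\Zor$ (the paper's Step~1, together with the Zorko verification carried out in the proof of Theorem~4.3). The only difference is organizational---you fold the Zorko check for the truncation directly into the present proof, whereas the paper does that computation separately in the proof of $\Vziast\subset\Zor$ and then cites \eqref{approx}.
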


\begin{proof}
%
%

\underline{Step 1}: The claim holds true for functions $f\in\Vziast$ with compact support. In fact, if the kernel $\phi\in C_0^\infty$ then the mollifiers $f\ast\phi_t$  have compact support and belong to $\Vziast$ (cf. \ref{theo:invariant}). Moreover, they approximate $f$ in Morrey norm (recall the discussion in the beginning of this section leading to \eqref{approx}).

\underline{Step 2}: We show now that functions from $\Vziast$ can be approximated by compactly supported functions in Morrey norm. Let $f\in\Vziast$. As before let $\chi_k := \chi_{\Rn\setminus B(0,k)}$, $k\in\N$. For each $k$, set
$$ f_k=f \ \ \ \text{on the ball} \ B(0,k) \ \ \ \ \text{and} \ \ f_k=0\ \ \text{otherwise}.$$

Let $\varepsilon>0$ be arbitrary. By the vanishing properties ($V_0$) and ($V^\infty$) one finds $r_0, R>0$ such that
$$
\sup_{x\in\Rn} \mathfrak{M}_{p,\lambda}\big(f-f_k;x,r\big) = \sup_{x\in\Rn} \mathfrak{M}_{p,\lambda}\big(f\chi_k;x,r\big) < \varepsilon
$$
for all $k\in\N$ and every $r<r_0$ or $r>R$. Hence
$$
\|f-f_k\|^p_{p,\lambda} < \max\{\varepsilon,S_{r_0,R}\}
$$
where
$$
S_{r_0,R}(k):= \sup_{x\in\Rn \atop r\in[r_0,R]} \mathfrak{M}_{p,\lambda}\big(f\chi_k;x,r\big).
$$
Now, by the vanishing property ($V^\ast$) and Lemma \ref{lem:vast}, we get
$$
S_{r_0,R}(k)\leq \sup_{x\in\Rn} r_0^{-\lambda}\int_{B(x,R)} |f(y)|^p\,\chi_k(y)\,dy < \varepsilon
$$
for all $k$ large enough. Therefore,
$$
\|f-f_k\|_{p,\lambda}  \rightarrow 0 \ \ \ \text{as} \ \ \ k\to\infty.
$$
\end{proof}

From the previous result and the fact that $\Vziast$ is closed (cf. Lemma~\ref{lem:closed}), we obtain an explicit description of $\overset{\circ}{L}{}^{p,\lambda}$, the closure of $C_0^\infty$ in the homogeneous Morrey spaces.

\begin{corollary}
For $0\leq \lambda <n$ and $1\leq p<\infty$ the set $C_0^\infty$ is dense in $\Vziast$. Moreover, $\overset{\circ}{L}{}^{p,\lambda}=\Vziast$.
\end{corollary}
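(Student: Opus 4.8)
The plan is to obtain both assertions as formal consequences of Theorem~\ref{theo:approx-main} together with Lemma~\ref{lem:closed}, once the elementary inclusion $C_0^\infty\subseteq\Vziast$ is in place. So I would first verify this inclusion. Fix $\psi\in C_0^\infty$ with $\supp\psi\subseteq B(0,a)$ and put $M=\|\psi\|_\infty$. For property ($V_0$) I would bound $\mathfrak{M}_{p,\lambda}(\psi;x,r)\le M^p\,|B(0,1)|\,r^{\,n-\lambda}$ uniformly in $x$, which goes to $0$ as $r\to 0$ since $\lambda<n$. For property ($V^\infty$), when $r$ is large I would instead use $\mathfrak{M}_{p,\lambda}(\psi;x,r)\le r^{-\lambda}\|\psi\|_p^p$, which tends to $0$ as $r\to\infty$ (if $\lambda=0$ the three conditions are automatic, since then the ambient space is $L^p$, as already recorded in the text). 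Finally, for every $N>a$ one has $\psi\,\chi_N\equiv 0$, so $\mathcal{A}_{N,p}(\psi)=0$ and property ($V^\ast$) holds trivially. Thus $\psi\in\Vz\cap\Vi\cap\Vast=\Vziast$.

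With the inclusion $C_0^\infty\subseteq\Vziast$ at hand, density is immediate: given $f\in\Vziast$ and $\varepsilon>0$, Theorem~\ref{theo:approx-main} supplies $g\in C_0^\infty$ with $\|f-g\|_{p,\lambda}<\varepsilon$, and since $g$ itself lies in $\Vziast$ this shows that $C_0^\infty$ is a dense subset of $\Vziast$ in the Morrey norm.

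For the identification $\overset{\circ}{L}{}^{p,\lambda}=\Vziast$, I recall that $\overset{\circ}{L}{}^{p,\lambda}$ is by definition the closure of $C_0^\infty$ in $\Lh$. The inclusion $\overset{\circ}{L}{}^{p,\lambda}\subseteq\Vziast$ would follow from Lemma~\ref{lem:closed}: since $C_0^\infty\subseteq\Vziast$ and $\Vziast$ is closed in $\Lh$, the Morrey-closure of $C_0^\infty$ stays inside $\Vziast$. The reverse inclusion $\Vziast\subseteq\overset{\circ}{L}{}^{p,\lambda}$ is exactly the density statement just proved, namely that every element of $\Vziast$ is a Morrey-limit of $C_0^\infty$ functions. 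Putting the two inclusions together yields the desired equality.

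I do not expect any genuine obstacle here: all the analytic content has been absorbed into Theorem~\ref{theo:approx-main} (and, upstream, into Theorem~\ref{theo:invariant} and Lemma~\ref{lem:vast}). The only thing needing a short check is the inclusion $C_0^\infty\subseteq\Vziast$, together with the harmless bookkeeping that in the limiting case $\lambda=0$ everything collapses to the classical statement $\overset{\circ}{L}{}^{p,0}=L^p$.
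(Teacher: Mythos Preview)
Your argument is correct and matches the paper's own justification, which simply observes that the corollary follows from Theorem~\ref{theo:approx-main} together with the closedness of $\Vziast$ established in Lemma~\ref{lem:closed}. You supply in addition the explicit verification of $C_0^\infty\subseteq\Vziast$, which the paper leaves implicit; this is a welcome clarification and does not deviate from the intended route.
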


Recalling the known result \eqref{bidual} we see that $\Lh$ is precisely the bidual of the new subspace $\Vziast$.

\section{A generalization of embeddings into weighted $L^p$ spaces}\label{sec:improved-embed}

The next result generalizes some known embeddings of Morrey spaces into weighted Lebesgue spaces (see, for instance, \cite{Kato92}, \cite{RosTri15}).
Recall that a non-negative function $g$ on $[0,\infty)$ is called \emph{almost decreasing} if there exists a constant $c>0$ such that $g(s) \le c\,g(t)$ for all $s\ge t (\ge 0)$.

\begin{theorem}\label{the:weighted-embed}
Let $1\le p<\infty$ and $0<\lambda<n$.
Let $w=w(|x|)$ be a radial weight such that $w(t) \approx 1$ for $0\leq t< 1$. Then
\begin{equation*}
\Lh(\Rn) \hookrightarrow L^p(\Rn,w) \ \ \ \ \text{and} \ \ \ \ \Lnh(\Rn) \hookrightarrow L^p(\Rn,w)
\end{equation*}
if $\,w(t)^p\,t^\gamma\,$ is almost decreasing, and
\begin{equation}\label{integral}
\int_1^\infty w(t)^p\,t^{\gamma-1}\,dt<\infty
\end{equation}
with $\gamma =\lambda$ in the homogeneous case and $\gamma=n$ in the inhomogeneous case.
\end{theorem}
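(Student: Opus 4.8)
The plan is to prove both embeddings simultaneously, since they differ only in the exponent $\gamma$ (with $\gamma=\lambda$ in the homogeneous case and $\gamma=n$ in the inhomogeneous one) and in one elementary Morrey estimate. Write $\|f\|_\ast$ for $\|f\|_{p,\lambda}$ or $\|f\|_{\mathcal{L}^{p,\lambda}}$ accordingly, and decompose $\Rn$ into dyadic annuli: with $A_k:=B(0,2^k)\setminus B(0,2^{k-1})$, $k\ge 1$, one has
$$
\|wf\|_p^p = \int_{B(0,1)} |f(x)|^p w(|x|)^p\,dx + \sum_{k=1}^\infty \int_{A_k} |f(x)|^p w(|x|)^p\,dx .
$$
Near the origin $w(|x|)\approx 1$, so the first term is $\lesssim \int_{B(0,1)}|f|^p = \mathfrak{M}_{p,\lambda}(f;0,1)\le\|f\|_\ast^p$, the radius $1$ being admissible for both norms.

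For the annular pieces I would bound $w$ by $M_k:=\esssup_{2^{k-1}\le t<2^k} w(t)^p$ and use $A_k\subset B(0,2^k)$ to get $\int_{A_k}|f|^p w^p\le M_k\int_{B(0,2^k)}|f(x)|^p\,dx$. Here the only place where the two cases part ways: in the homogeneous case $\int_{B(0,2^k)}|f|^p = (2^k)^\lambda\,\mathfrak{M}_{p,\lambda}(f;0,2^k)\le 2^{k\lambda}\|f\|_{p,\lambda}^p$, while in the inhomogeneous case one covers $B(0,2^k)$ by $\lesssim 2^{kn}$ unit balls and sums the corresponding quantities $\mathfrak{M}_{p,\lambda}(f;\cdot,1)\le\|f\|_{\mathcal{L}^{p,\lambda}}^p$; thus $\int_{A_k}|f|^p w^p\lesssim M_k\,2^{k\gamma}\|f\|_\ast^p$ in both cases, and it remains to show $\sum_{k\ge 1}M_k\,2^{k\gamma}<\infty$.

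This is where the hypotheses on $w$ are used. Set $g(t):=w(t)^p t^\gamma$, which is almost decreasing. For a.e. $t\in[2^{k-1},2^k)$ and a.e. $s\in[2^{k-2},2^{k-1}]$ (so $t\ge s$) we have $g(t)\le c\,g(s)$; averaging in $s$ over an interval of length $2^{k-2}$, using $t^{-\gamma}\le (2^{k-1})^{-\gamma}$ and $s^\gamma\le 2^{k-1}s^{\gamma-1}$ on $[2^{k-2},2^{k-1}]$, one obtains for $k\ge 2$
$$
M_k \;\le\; (2^{k-1})^{-\gamma}\,\esssup_{2^{k-1}\le t<2^k} g(t) \;\le\; \frac{c}{(2^{k-1})^\gamma}\cdot\frac{1}{2^{k-2}}\int_{2^{k-2}}^{2^{k-1}} g(s)\,ds \;\lesssim\; (2^{k-1})^{-\gamma}\int_{2^{k-2}}^{2^{k-1}} w(s)^p s^{\gamma-1}\,ds ,
$$
hence $M_k\,2^{k\gamma}\lesssim \int_{2^{k-2}}^{2^{k-1}} w(s)^p s^{\gamma-1}\,ds$. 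Summing over $k\ge 2$ the intervals $[2^{k-2},2^{k-1}]$ tile $[1,\infty)$, so $\sum_{k\ge 2}M_k\,2^{k\gamma}\lesssim\int_1^\infty w(s)^p s^{\gamma-1}\,ds<\infty$ by \eqref{integral}; the term $k=1$ is finite by the same estimate applied on $[1/2,1)$, where $g(s)\approx s^\gamma\approx 1$. Combining the three contributions yields $\|wf\|_p^p\lesssim\|f\|_\ast^p$, i.e., the asserted (continuous) embeddings.

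The argument is essentially bookkeeping, and I expect no deep obstacle; the one subtle point is the \emph{backward shift}: since $g$ is almost \emph{decreasing}, the $\esssup$ of $w^p$ over $A_k$ cannot be compared to an average over $A_k$ itself, and one must pass to the previous annulus $A_{k-1}$ to generate the telescoping integral $\int_1^\infty w(s)^p s^{\gamma-1}\,ds$. Secondary care is needed for the covering estimate of $B(0,2^k)$ by unit balls in the inhomogeneous case and for the region near $t=1$, where the normalization $w(t)\approx 1$ on $[0,1)$ is what ties everything together.
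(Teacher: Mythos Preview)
Your proof is correct and follows essentially the same approach as the paper's: split off the unit ball where $w\approx 1$, decompose the exterior into dyadic annuli, use the almost-decreasing property of $w(t)^p t^\gamma$ to control $w$ on each annulus, and sum via the integral condition \eqref{integral}. The only cosmetic difference is in the inhomogeneous case, where the paper works directly with lattice balls $B(m,1)$, $m\in\Zn\setminus\{0\}$, and bounds $w(|y|)^p\lesssim w(|m|-1)^p$, whereas you stay with dyadic annuli and cover $B(0,2^k)$ by $\lesssim 2^{kn}$ unit balls; both routes yield the same factor $2^{kn}$ and the same summability requirement.
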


\begin{proof}
We split the weighted norm of $f$ as
$$
\|f\|_{L^p(w)}^p = \int_{|y|<1} |f(y)|^p\,w(y)^p\,dy + \int_{|y|\geq 1} |f(y)|^p\,w(y)^p\,dy =: I_1(f) + I_2(f).
$$
Since $w(|y|) \approx 1$ for $|y|<1$ we have
$$ I_1(f) \lesssim \|f\|_{\Lh}^p \ \ \ \ \text{and} \ \ \ \ I_1(f) \lesssim \|f\|_{\Lnh}^p.$$

As regards to $I_2(f)$ the estimation is different in each situation. In the homogeneous case we have
\begin{equation*}
I_2(f)  =  \sum_{k=0}^\infty \int_{2^{k} \le |y| < 2^{k+1}} |f(y)|^p\, w(y)^p\,dy \lesssim  \sum_{k=0}^\infty w(2^k)^p \,2^{k\lambda} \, \|f\|_{\Lh}^p \lesssim  \|f\|_{\Lh}^p\,,
\end{equation*}
where in the first inequality we used the monotonicity of $w(t)^p\,t^\gamma$ and in the second we used the integral assumption \eqref{integral} with $\gamma=\lambda$.

In the inhomogeneous case we use similar arguments (with $\gamma=n$ in the integral condition \eqref{integral}) and estimate the integral as follows:
\begin{equation*}
I_2(f)  \leq  \sum_{m\in\Zn\atop |m|\neq 0} \int_{B(m,1)} |f(y)|^p\, w(y)^p\,dy \lesssim  \sum_{|m|\geq 1} w(|m|-1)^p \, \|f\|_{\Lnh}^p \lesssim  \|f\|_{\Lnh}^p.
\end{equation*}
\end{proof}


\begin{example}\label{ex:weight}
It is easy to see that the weights
$$w(x)=(1+|x|)^\alpha \approx (1+|x|^2)^{\alpha/2} =: w_\alpha(x)$$
satisfy the assumptions of Theorem~\ref{the:weighted-embed} when $\alpha<-\gamma/p$. Hence we have
$$\Lh(\Rn) \hookrightarrow L^p(\Rn,w_\alpha) \ \ \ \ \ \text{if} \ \ \ \alpha<-\lambda/p$$
and
$$\Lnh(\Rn) \hookrightarrow L^p(\Rn,w_\alpha) \ \ \ \ \ \text{if} \ \ \ \alpha<-n/p$$
(cf. \cite[p.132]{Kato92} and \cite[Theorem~3.1]{RosTri15}, respectively).
\end{example}

\begin{example}
The weights
$$
w(x)=(1+|x|)^{-\frac{\gamma}{p}}\,  \ln^{-\beta}(e+|x|), \ \ \ \ \textrm{with} \ \ \ \beta>1/p,
$$
are also admissible in the sense of Theorem~\ref{the:weighted-embed}. This corresponds to the limiting case $\alpha=-\gamma/p$ given in Example~\ref{ex:weight}.

\end{example}


\end{document}